\newcommand \datum {October 8, 2023}
\numberwithin{equation}{section}
\theoremstyle{plain}
 \newtheorem{theorem}{Theorem}[section]
 \newtheorem{lemma}[theorem]{Lemma}
 \newtheorem{proposition}[theorem]{Proposition}
 \newtheorem{observation}[theorem]{Observation}
\theoremstyle{definition}
 \newtheorem{definition}[theorem]{Definition}
 \newtheorem{remark}[theorem]{Remark}
\theoremstyle{remark}
\newcommand \ideal [1]{\mathord\downarrow #1}
\newcommand \pideal [2]{\mathord\downarrow_{#1}#2} 
\newcommand \pfilter [2]{\mathord\uparrow_{#1}#2}
\newcommand \filter[1]{\mathord\uparrow #1}
\newcommand \ibinom[2] {\textup C_{\textup{bin}}(#1,#2)}
\newcommand \Symn {\textup{Sym}_n}
\newcommand \reqref[1]  {\mathrel{\overset{\eqref{#1}}=}}
\newcommand \rleqref[1] {\mathrel{\overset{\eqref{#1}}\leq}}
\newcommand \embeddable { \mathrel{  \raisebox{-2.5pt}{$\overset{\scriptscriptstyle{\textup{exists}}}\hookrightarrow$}   }}
\newcommand\embeds {\hookrightarrow}
\newcommand\vv{\vec v}
\newcommand\vpv{\vec v^\bullet}
\newcommand\vX{\vec X}
\newcommand\vpX{\vec X^\bullet}
\newcommand\sima{a}
\newcommand\bulla{a^\bullet}
\newcommand\bullX{X^\bullet}
\newcommand\bulli{i^\bullet}
\newcommand\bullv{v^\bullet}
\newcommand\setc{\overline c}
\newcommand\setd{\overline d}
\newcommand\sete{\overline e}
\newcommand\setz{\overline z}
\newcommand\seta{\overline a}
\newcommand\bullseta{\overline a^\bullet}
\newcommand \qum{h}
\newcommand\dst{3.4} 
\newcommand \bcirc[1] {\fill[black] (#1) circle (5pt);
  \draw [black,thick] (#1)  circle [radius=5pt]}
\newcommand \lcirc[1] {\fill[red] (#1) circle (2pt);
  \draw [red,thick] (#1)  circle [radius=2pt]}
\newcommand \wcirc[1] {\fill[white] (#1) circle (5pt);
  \draw [black,thick] (#1)  circle [radius=5pt]}
\newcommand \nodetxt[2] {\draw [white,fill] (#1) circle [radius=0.0pt] node [black,below=7pt] {#2} } 
\newcommand \nodeutxt[2] {\draw [white,fill] (#1) circle [radius=0.0pt] node [black,above=-1pt] {#2} } 
\newcommand \nodeuutxt[2] {\draw [white,fill] (#1) circle [radius=0.0pt] node [black,above=1pt] {#2} } 
\newcommand \nodettxt[2] {\draw [white,fill] (#1) circle [radius=0.0pt] node [red,above=-7pt] {\tbf{#2}}} 
\newcommand\hot{20pt}
\newcommand\vot{20pt}
\newcommand\het{20pt}
\newcommand\wit{20pt}
\newcommand\gap{8pt}
\newcommand \fbx[2]{A^{(#1)}_{#2}}
\newcommand \Dn[1] {\textup{Dn}(#1)}
\newcommand \meetgen[1] {[#1]_\wedge}
\newcommand \chain [1]{\mathsf C_{#1}}
\newcommand \Pow [1]{\mathsf P(#1)}
\newcommand \nPow {\Pow{[n]}}
\newcommand \loS  {{_{\textup{lo}}S}}
\newcommand \upS  {{^{\textup{up}\kern -1pt}S}}
\newcommand \uppS  {{^{\textup{up}+\kern -1pt}S}}
\newcommand \lupS[1]  {\lint{{^{\textup{up}\kern -1pt}S(#1)}}}
\newcommand\Sp {S}
\newcommand\fsb {f_{\textup{sb}}}
\newcommand\Asp {S^{\ast}}
\newcommand\afsb {f^{\ast}_{\textup{sb}}}
\newcommand\gmin [1]{G_{\textup{min}}(#1)}
\newcommand \Jir [1]{J(#1)}
\newcommand \vp{\vec\pi}
\newcommand \lpos {\textup{Lp}}
\newcommand \iset {\textup{Is}}
\newcommand\lint[1]{\lfloor #1\rfloor}
\newcommand\uint[1]{\lceil #1\rceil}
\newcommand\FS [1] {F_{\textup{meet}}(#1)}
\renewcommand \phi{\varphi}
\newcommand \Nnul {{\mathbb N_0}}
\newcommand \Nplu {{\mathbb N^+}}
\newcommand{\tbf}{\textbf}
\newcommand{\set}[1]{\{#1\}}
\newcommand \jour[1]{}
\newcommand \nothing[1] {}
\newcommand \onlymiktex[1]{#1} 
\newcommand \red[1]{{\textcolor{red}{#1}\color{black}}}
\begin{document}

\title[Sperner theorems and minimum generating sets]
{Sperner theorems for unrelated copies of some partially ordered sets in a powerset lattice 
and minimum generating sets 
of  powers of distributive lattices}

\author[G.\ Cz\'edli]{G\'abor Cz\'edli}
\email{czedli@math.u-szeged.hu}
\urladdr{http://www.math.u-szeged.hu/~czedli/}
\address{University of Szeged, Bolyai Institute. 
Szeged, Aradi v\'ertan\'uk tere 1, HUNGARY 6720}

\begin{abstract} For a finite poset (partially ordered set) $U$ and a natural number $n$, let $\Sp(U,n)$ denote the largest number of pairwise unrelated copies of $U$ in the powerset lattice (AKA subset lattice) of an $n$-element set. If $U$ is the singleton poset, then $\Sp(U,n)$ was determined by E.\ Sperner in 1928; this result is well known in extremal combinatorics. Later, exactly or asymptotically,  Sperner's theorem was extended to  other posets   by A.\,P.\ Dove, J.\,R.\ Griggs, G.\,O.\,H.\ Katona, D.\ Nagy,  J.\ Stahl, and W.\,T.\,Jr.\ Trotter.
We determine $\Sp(U,n)$ for all finite posets with 0 and 1, and we give reasonable estimates for the ``V-shaped'' 3-element poset  and the 4-element poset with 0 and three maximal elements.
 
For a lattice $L$, let $\gmin L$ denote the minimum size of generating sets of $L$. We prove that if $U$ is the poset of  the join-irreducible elements of a finite distributive lattice $D$, then the function $k\mapsto \gmin{D^k}$ 
is the left adjoint of the function $n\mapsto \Sp(U,n)$. This allows us to determine $\gmin{D^k}$ in many cases. E.g., for a 5-element distributive lattice $D$,  $\gmin{D^{2023}}=18$ if $D$ is a chain and 
 $\gmin{D^{2023}}=15$ otherwise.
 
It follows that large direct powers of small distributive lattices are appropriate for our 2021 cryptographic authentication protocol. 
\end{abstract}

\thanks{This research was supported by the National Research, Development and Innovation Fund of Hungary, under funding scheme K 138892.  \hfill{\red{\tbf{\datum}}}}

\subjclass {Primary: 05D05. Secondary: 06D99}


\keywords{Sperner theorem for partially ordered sets, antichain of posets,  unrelated copies of a poset, distributive lattice, smallest generating set, minimum-sized generating set,  authentication.}

\maketitle

\section{Introduction}\label{sect:intro}
\subsection{Targeted readership}
This paper belongs both to extremal combinatorics and lattice theory, and  it is intended to be self-contained for those who know the concept of a free semilattice, that of a distributive lattice, and the relation between lattice orders and lattice operations. That is, apart from some basic combinatorial facts that are always taught for B.Sc.\ students, the reader is assumed to be familiar only with 
some  facts and concepts that are often taught in M.Sc. courses.

\subsection{Purpose and outline} Our main goal is to establish a \emph{bridge} between the combinatorial topic  of  Sperner (type) theorems 
and the lattice theoretical topic of  minimum generating sets of finite lattices; this goal is accomplished by 
Theorem \ref{thm:main} in Section \ref{sect:bridge}. 
If we start from  the Sperner (type) theorems proved  by Griggs, Stahl, and Trotter \cite{griggsatall},
Dove and Griggs \cite{dovegriggs}, and Katona and Nagy \cite{KatonaNagy},  then the just-mentioned ``bridge'' can lead only to asymptotic results, in which we are less interested, or to  
rather special distributive lattices. Hence,  we generalize their Sperner theorems  in a modest way, see Observation \ref{obs:nsTgtfblS}, and we give reasonable estimates for a particular case; see Proposition \ref{prop:Wposet}.

A poset (that is, partially ordered set) $U$ is said to be \emph{bounded} if it has a smallest element $0=0_U$ and a largest element $1=1_U$; these elements are uniquely determined if they both exist. 
In Section \ref{sect:exact}, we give an \emph{exact formula} for the maximum number of pairwise unrelated isomorphic copies of a finite \emph{bounded} poset  among the subsets of an $n$-element set; see Observation \ref{obs:nsTgtfblS}, which is an easy  generalization of a result of Griggs, Stahl, and Trotter \cite{griggsatall} from chains to bounded posets.
The situation becomes more exiting in Section \ref{sect:loweruppergeneral}, where we present estimates for two particular posets, $V$ and $W$ given in Figure \ref{figone}.

The search for small generating sets has more than half a century long history. Indeed, this topic goes back  (at least) to Gelfand an Ponomarev \cite{gelfand}; see Z\'adori \cite{zadori} for details of their result on subspace lattices. For small generating sets in some other lattices,
see also the introductions and the bibliographic sections of  Cz\'edli \cite{czgDEBRauth} and \cite{czgboolegen}.
Recently in \cite{czgDEBRauth} and \cite{czgboolegen}, we have pointed out that large lattices and large powers of (small) lattices can have applications in cryptography and authentication provided that they have small generating sets. This had led to the original motivation of the present paper: we wanted to determine how much 
elements are needed to generate a large direct power of a small distributive lattice. 

Even though we prove only estimates rather than exact Sperner theorems in Section \ref{sect:loweruppergeneral}, they are sufficient 
to determine the minimum number of generators of direct powers of the corresponding distributive lattices with quite good accuracy and, in most of the cases, exactly; this will be formulated in \eqref{eq:mLptpsGPs} and 
 exemplified explicitly by   \eqref{eq:mFljZrstlWlFj} and implicitly by  all collections of concrete data displayed in the paper. 
Note that even less  accuracy would be satisfactory from cryptographic point of view, in which 
the role of a small \emph{minimum} number of generators is to indicate that there are many \emph{small generating sets}. 
Hence, in addition to the exact lattice theoretical results that we can  obtain by  combining Theorem \ref{thm:main} with  Observation  \ref{obs:nsTgtfblS} or \eqref{eq:knknRswt}, Section \ref{sect:loweruppergeneral} also offers new possibilities for the cryptographic authentication protocol given in \cite{czgDEBRauth} and \cite{czgboolegen}.

\section{A bridge between combinatorics and lattice theory}\label{sect:bridge}
Except for the sets $\Nplu:=\set{1,2,3,\dots}$ and $\Nnul:=\set{0}\cup \Nplu\kern-1pt$, all sets and structures in this paper are assumed to be \emph{finite} even when this is not explicitly mentioned. 

Next, we recall some concepts and notations, and introduce a  few new ones.
For a real number $x$, the \emph{lower integer part} and the \emph{upper integer part} of $x$ are denoted by $\lint x$ and $\uint x$, respectively. For $n\in \Nnul$, note the rule: $\lint{n/2}+\uint{n/2}=n$. 
A  function $f\colon \Nnul\to\Nnul$ is \emph{non-bounded} if for each $k\in\Nnul$, there exists an $n\in\Nnul$ such that $f(n)>k$. 
For a non-bounded function $f\colon \Nnul\to\Nnul$, the \emph{left adjoint} $f^\ast$ of $f$ is the function
\begin{equation}
f^\ast\colon \Nnul\to\Nnul\text{ defined by }k\mapsto \min\set{n\in\Nnul: k\leq f(n)}. 
\label{eq:gPnmfnktPvL}
\end{equation}
(The terminology ``left adjoint'' is explained by categorified posets, but we do not need this fact.) If $f(x)\leq f(y)$ holds whenever $x\leq y$, then $f$ is an \emph{increasing function}. For $\Nnul\to\Nnul$ functions $f_1$ and $f_2$,  $f_1\leq f_2$ means that $f_1(x)\leq f_2(x)$ holds for every $x\in \Nnul$. The following lemma follows in a straightforward way from definitions and it belongs to folklore, so we do not prove it in the paper.

\begin{lemma}\label{lemma:wBprznJTrh}
If $f$, $f_1$, and $f_2$ are increasing non-bounded $\Nnul\to \Nnul$ functions then so are their left adjoints. Furthermore, for all  $n,k\in\Nnul$,   
\begin{align}
&k\leq f(n)\,\,\text{ if and only if }\,\, f^\ast(k)\leq n,
\label{eq:smZstmrCvr} \\
&k > f(n)\,\,\text{ if and only if }\,\, f^\ast(k)> n,
\label{eq:bDVLtlkHvj} \\
&f(n)=\max\set{y\in\Nnul: f^\ast(y)\leq n}
\label{eq:hmNmfMnStHhKnrP},\text{ and}\\
&f_1\leq f_2\,\,\text{ if and only if }\,\, f_2^\ast \leq f_1^\ast.
\label{eq:srLtMksZkrmglDn}
\end{align}
\end{lemma}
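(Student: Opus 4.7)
The plan is to derive everything from the single defining identity \eqref{eq:gPnmfnktPvL} by first establishing the ``Galois connection'' \eqref{eq:smZstmrCvr}; the remaining three assertions will then fall out formally. To see that $f^\ast$ is well-defined on all of $\Nnul$, note that non-boundedness of $f$ guarantees that $\set{n\in\Nnul : k\leq f(n)}$ is non-empty for every $k$, so the minimum exists. That $f^\ast$ is increasing is immediate: enlarging $k$ can only shrink the candidate set in \eqref{eq:gPnmfnktPvL}, which cannot decrease its minimum. That $f^\ast$ is non-bounded follows by picking, for a prescribed $k_0$, the value $k:=f(k_0)+1$; since $f$ is increasing, every $m\leq k_0$ satisfies $f(m)\leq f(k_0)<k$, so no such $m$ is a candidate, forcing $f^\ast(k)>k_0$.

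The crux is \eqref{eq:smZstmrCvr}. The forward direction is free: $k\leq f(n)$ says $n$ itself belongs to the set whose minimum defines $f^\ast(k)$, so $f^\ast(k)\leq n$. For the reverse, write $m:=f^\ast(k)$ so that $k\leq f(m)$; combined with $m\leq n$ and the monotonicity of $f$, this yields $k\leq f(m)\leq f(n)$. Then \eqref{eq:bDVLtlkHvj} is just the contrapositive of \eqref{eq:smZstmrCvr}, while \eqref{eq:hmNmfMnStHhKnrP} is obtained by reading \eqref{eq:smZstmrCvr} as the set equality $\set{y : y\leq f(n)}=\set{y : f^\ast(y)\leq n}$ and taking maxima.

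For \eqref{eq:srLtMksZkrmglDn}, suppose $f_1\leq f_2$; applying \eqref{eq:smZstmrCvr} with $n:=f_1^\ast(k)$ gives $k\leq f_1(n)\leq f_2(n)$, whence $f_2^\ast(k)\leq n=f_1^\ast(k)$. For the converse, assuming $f_2^\ast\leq f_1^\ast$, identity \eqref{eq:hmNmfMnStHhKnrP} shows that the sets defining $f_1(n)$ and $f_2(n)$ are nested, and taking maxima yields $f_1(n)\leq f_2(n)$. I expect no serious obstacle here: the whole lemma is a bookkeeping exercise around the adjunction \eqref{eq:smZstmrCvr}, and the only subtlety worth flagging is invoking non-boundedness precisely where it is needed, namely to ensure that $f^\ast$ is total on $\Nnul$.
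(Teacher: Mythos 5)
Your proof is correct, and since the paper explicitly omits a proof of this lemma (declaring it folklore that ``follows in a straightforward way from definitions''), there is no competing argument to compare against; yours is precisely the standard derivation the author had in mind, with \eqref{eq:smZstmrCvr} established first and \eqref{eq:bDVLtlkHvj}, \eqref{eq:hmNmfMnStHhKnrP}, and \eqref{eq:srLtMksZkrmglDn} extracted formally from it. In particular, you invoke the hypotheses exactly where they are needed: non-boundedness of $f$ to make $f^\ast$ total on $\Nnul$, and monotonicity of $f$ both for the reverse implication in \eqref{eq:smZstmrCvr} and for the non-boundedness of $f^\ast$.
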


For a poset  $U$ and a natural number $k\in\Nplu$, let $kU=(kU,\leq)$ denote the \emph{cardinal sum of $k$ isomorphic copies} of $U$. That is, if $(U_1,\rho_1)$, \dots, $(U_k;\rho_k)$ are pairwise disjoint isomorphic copies of $U=(U;\leq)$, then $(kU;\leq):=(U_1\cup\dots \cup U_k; \rho_1\cup\dots \cup \rho_k)$. 
Then for $x\in U_i$ and $y\in U_j$, if $i\neq j$, then neither $x\leq y$ nor $y\leq x$, that is, $x$ and $y$ are \emph{incomparable}, in notation,  $x\parallel y$.  
In other words, $U_i$ and $U_j$ are \emph{unrelated} for  $i\neq j$. 
We obtain the (Hasse) diagram of $kU$ by  putting $k$ copies of the diagram of $U$ side by side.
For $k\in \Nnul$, the $(k+1)$-element chain will be denoted by $\chain k$. Note that $k\chain 0$ is the \emph{$k$-element antichain}. For $n\in\Nplu$, $[n]$ will stand for the set $\set{1,\dots, n}$ while $[0]:=\emptyset$. 
For a set $A$, the \emph{powerset lattice} (also called the \emph{subset lattice}) of $A$ is the lattice $(\set{X: X\subseteq A};\subseteq)$.  In this lattice, which we denote by  $\Pow A$ or $(\Pow A;\subseteq)$, the operations
$\vee$ and $\wedge$ are $\cup$ and $\cap$, respectively. 
For an element $y$ in a poset $U$, 
we denote $\set{x\in U: x\leq y}$ by $\ideal y$ or, if confusion threatens, by  $\pideal U y$. Similarly, $\filter y$ and $\pfilter U y$ stand for $\set{x\in U: y\leq x}$. 
For posets $U_1$ and $U_2$ and a function $\phi\colon U_1\to U_2$, $\phi$ is an \emph{order embedding} if for all $x,y\in U_1$, $x\leq y\iff \phi(x)\leq \phi(y)$. 
Let $\phi\colon U_1\embeds U_2$ denote that $\phi$ is an order embedding. Furthermore,  let $U_1\embeddable U_2$  denote that there exists an order embedding $\phi\colon U_1\embeds U_2$.
For example, if $U$ is a poset, then the function $U\to \Pow U$ defined by $y\mapsto\pideal U y$ is an order embedding. Thus,
\begin{equation}
\text{for any poset }U, \text{ we have that }U\embeddable\Pow{|U|}.
\label{eq:vnkRkmdHs}
\end{equation}
If $U_1\subseteq U_2$ and the function $U_1\to U_2$ defined by $x\mapsto x$ is an order-embedding, then $U_1$ is a \emph{subposet} of $U_2$; this fact is denoted by $U_1\leq U_2$.
A poset cannot be empty by definition; the only exception is that for every poset $U$, $0U$ is a subposet of (and is embedded into) any other poset; the following definition needs this convention.

\begin{definition} \label{def:SpSnl}
Let $U$ be a finite poset. For $k,n\in\Nnul$, let 
\begin{align}
\Sp(U,n)&:=\max\set{k\in\Nnul: k U \embeddable \nPow }\text{ and}
\label{eq:zsdmznrmVck}\\
\Asp(U,k)&:=\min\set{n\in \Nnul: k U\embeddable\nPow } =  \min\set{n\in\Nnul: k\leq \Sp(U,n)};
\label{eq:rtrkfjnhTkzkc}
\end{align}
\eqref{eq:vnkRkmdHs} implies that the definition ``$:=$'' in \eqref{eq:rtrkfjnhTkzkc} makes sense.
For $n\in\Nplu$, let
\begin{equation}
\fsb(n):={n\choose{\lint{n/2}}}\,\,
\text{ and }\,\,\afsb(k):=\min\set{n\in\Nplu: k\leq \fsb(n)}. 
\label{eq:snuln}
\end{equation}
For the sake of  better outlook and optical readability, let us agree that in in-line formulas, we often write $\ibinom {m}{t}$  instead of $\binom m t$; especially when $m$ or $t$ are complicated expressions with subscripts. With this convention, $\fsb(n)=\ibinom n{\lint{n/2}}$.
\end{definition}

\begin{remark}\label{rem:RmtVznkRm}
The notation in Definition \ref{def:SpSnl} is coherent with \eqref{eq:gPnmfnktPvL} since the functions $\Asp(U,-)\colon \Nnul\to \Nnul$ defined by $n\mapsto \Asp(U,n)$ and $\afsb$ are the left adjoints of the functions $\Sp(U,-)\colon \Nnul\to \Nnul$ defined by $n\mapsto \Sp(U,n)$ and $\fsb$, respectively. (For $\Asp(U,-)$, this follows immediately from $kU  \embeddable \nPow  \iff k\leq \Sp(U,n)$.) 
\end{remark}

The remark above enables us to  benefit from Lemma \ref{lemma:wBprznJTrh}. Note that the notation  $\fsb$ comes from  \underline Sperner's original \underline Binomial coefficient as a  \underline Function.
For subsets $X$ and $Y$ of $[n]$, using the terminology of Griggs, Stahl, and Trotter \cite{griggsatall}, we say that $X$ and $Y$ are \emph{unrelated} if $x\parallel y$ for all $x\in X$ and $y\in Y$. So $\Sp(U,n)$ is the maximum number of pairwise unrelated isomorphic copies of $U$ in $\nPow$.

With the notation introduced in Definition \ref{def:SpSnl}, Sperner's Theorem from \cite{sperner} asserts that $\Sp(\chain 0,n)=\fsb(n)$ while 
a Sperner theorem (i.e., a Sperner-type theorem) proved by Griggs, Stahl and Trotter \cite[Theorem 2]{griggsatall} asserts that 
\begin{equation}
\text{for }t\in\Nplu,\quad
\Sp(\chain t,n)=\fsb(n-t),\text{ that is, }\Sp(\chain t,n)= {n-t\choose{\lint{(n-t)/2}}}.
\label{eq:griggs}
\end{equation}
Note that, by convention, $\fsb(n-t)=0$ for $n<t$. For later reference, some values of $\Sp(\chain4,n)$ are as follows:
\begin{equation}
\begin{tabular}{l|r|r|r|r|r|r}
$n$ &17&18& 2024& 2025 &2026  \cr
\hline
$\Sp(\chain4,n)$ & 1\,716 & 3\,432 &
$2.137  \cdot 10^{606}$
&$4.272\cdot 10^{606}$ &
$8.544 \cdot 10^{606}$
 \cr
\end{tabular}\,.
\label{eq:tvgrHptnKhnjN}
\end{equation}

The \emph{length} of a finite poset $U$ is the largest $t$ such that $\chain t$ is a subposet of $U$. 
The result cited in \eqref{eq:griggs} has been generalized by Katona and Nagy \cite[Theorem 4.3]{KatonaNagy} to the following one.
\begin{equation}
\parbox{8cm}
{If $U$ is a finite poset of length $t$ such that $\Asp(U,1)=t$ then, for every $n\in\Nnul$,  $\Sp(U,n)=\fsb(n-t)$.}
\label{eq:knknRswt}
\end{equation}

A \emph{proper sublattice} of a lattice $L$ is a nonempty subset $X$ of $L$ such that $ X\neq L$ and $X$ is closed with respect to $\vee$ and $\wedge$. A subset $Y$ of $L$ is a \emph{generating set} of $L$ if no proper sublattice of $L$ includes $Y$.
As $L$ is assumed to be finite, the \emph{least size of a generating set} of $L$ makes sense; we denote it by 
\begin{equation}
\gmin L:=\min\set{|Y|: Y \text{ is a generating set of }L}.
\label{eq:gmindf}
\end{equation} 
In the $k$-th direct power $L^k:=L\times\dots\times L$ ($k$-fold direct product) of $L$, the lattice operations are performed  component-wise; we are interested in $\gmin {L^k}$ for some distributive lattices $L$. 
The set of \emph{join-irreducible} elements of $L$ is denoted by $\Jir L$; by definition, $x\in L$ belongs to $\Jir L$ if and only if $x$ covers exactly one element; in particular, the smallest element  $0=0_L$ of $L$ is not in $\Jir L$. With the order inherited from $L$,  $\Jir L=(\Jir L;\leq)$ is a poset.  
Now  that we have
\eqref{eq:gmindf} and Definition \ref{def:SpSnl}, we can formulate the main result of the paper.

\begin{theorem}\label{thm:main} 
If $D$ is a finite distributive lattice and $2\leq k\in\Nplu$, then 
$\gmin{D^k}=\Asp(\Jir D,k)$.
\end{theorem}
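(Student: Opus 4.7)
The plan is to convert the lattice-theoretic question into the combinatorial one via Birkhoff's representation theorem. For a finite distributive lattice $L$, we have $L\cong\Dn(\Jir L)$, so in the special case $L=D^k$, the join-irreducibles are precisely the ``coordinate join-irreducibles'' $(0,\dots,0,p,0,\dots,0)$ with $p\in\Jir D$. Hence $\Jir{D^k}$, as a poset, is isomorphic to the cardinal sum $k\cdot\Jir D$. With this identification in hand, the theorem becomes the claim that $\gmin{D^k}$ equals the least $n$ for which $k\cdot\Jir D\embeddable\nPow$.

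The heart of the proof will be a Birkhoff-style correspondence: for a generating candidate $G=\{g_1,\dots,g_n\}\subseteq L$, I would define $\tau\colon\Jir L\to\Pow{[n]}$ by $\tau(q):=\set{j:q\leq g_j}$, which is automatically antitone. Then I would prove the key equivalence that $G$ generates $L$ if and only if $\tau$ is an order-anti-embedding (equivalently, $q\mapsto[n]\setminus\tau(q)$ is an order-embedding into $\nPow$). The ``only if'' direction uses that in a distributive lattice a join-irreducible $q$ expressed as $\bigvee_i\bigwedge_{j\in J_i}g_j$ must actually satisfy $q=\bigwedge_{j\in J_i}g_j$ for some $i$; one then shows $q=\bigwedge_{j\in\tau(q)}g_j$, which forces the reverse implication $\tau(q)\supseteq\tau(q')\Rightarrow q\leq q'$. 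For the ``if'' direction, given an order-embedding one recovers $g_j$ from its Birkhoff downset $\set{q\in\Jir L:j\in\tau(q)}$, and checks that each $q\in\Jir L$ is the meet $\bigwedge_{j\in\tau(q)}g_j$, so every nonzero element of $L$, being a (nonempty) join of join-irreducibles, lies in $\langle G\rangle$.

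The main obstacle, and the place where the hypothesis $k\geq2$ is used, concerns the bottom element $0_L$, which is neither join-irreducible nor a nonempty join of join-irreducibles. One needs $0_L\in\langle G\rangle$, and the natural witness is $\bigwedge_{j=1}^n g_j$, which equals $0_L$ exactly when no $q\in\Jir L$ satisfies $\tau(q)=[n]$. Here the $k\geq2$ hypothesis saves the day: $\Jir{D^k}=k\cdot\Jir D$ contains incomparable elements coming from distinct components, and an order-anti-embedding must send them to incomparable subsets of $[n]$, so no $\tau(q)$ can equal the comparable-to-all element $[n]$. Thus $0_L\in\langle G\rangle$ automatically, and the correspondence between generating $n$-sets of $D^k$ and order-embeddings $k\cdot\Jir D\embeds\nPow$ is complete.

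Combining these steps, $\gmin{D^k}=\min\set{n\in\Nnul:k\cdot\Jir D\embeddable\nPow}$, which by \eqref{eq:zsdmznrmVck} and \eqref{eq:rtrkfjnhTkzkc} equals $\Asp(\Jir D,k)$, finishing the proof. I expect the bookkeeping in the ``only if'' direction (verifying that $\bigwedge_{j\in\tau(q)}g_j=q$ rather than merely $\geq q$) to be the most delicate computation, while the role of the $k\geq2$ assumption in rescuing the bottom element is the conceptually most interesting point.
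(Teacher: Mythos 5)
Your proposal is correct and takes essentially the same route as the paper: both directions rest on (i) $\Jir{D^k}\cong k\cdot\Jir D$, (ii) the disjunctive-normal-form plus join-irreducibility argument showing each $q\in\Jir{D^k}$ equals the meet of the generators above it, and (iii) the characteristic-set map $\tau(q)=\set{j: q\leq g_j}$, which is exactly the paper's least-preimage embedding $\psi$ into the free meet-semilattice $\FS n\cong \nPow\setminus\set{[n]}$ read through complementation, while your down-sets $\set{q: j\in\tau(q)}$ are precisely the paper's generators $Y_j$ of $\Dn{kU}$. The only differences are cosmetic --- you bypass the free-semilattice scaffolding and, commendably, make explicit the $0_L$ issue that the paper leaves implicit --- though, symmetrically to that discussion, you should also note that $k\geq 2$ (no maximum in $k\cdot\Jir D$) rules out $\tau(q)=\emptyset$, so each meet $\bigwedge_{j\in\tau(q)}g_j$ is over a nonempty index set and is a legitimate lattice term in the generators.
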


\begin{proof} For $t\in \Nplu$, denote by $\FS t=\FS{x_1,\dots,x_t}$ the free meet-semilattice with free generators $x_1$, \dots, $x_t$. We know from folklore and from \S4 in 
Page 240  of McKenzie, McNulty  and Taylor \cite{mcKmcNT} (and it is not hard to see) that $\FS t$ is a subposet of $\Pow {[t]}$; in fact, $\FS t$ is (order isomorphic to) $\Pow{[t]}\setminus\set{[t]}$.

Let $U:=\Jir D$. 
With $U_1:=U\times\set 0\times\dots \times \set0$, \dots, $U_k:=\set 0\times\dots \times \set0\times U$, it is clear that $U_1\cup\dots\cup U_k\subseteq \Jir {D^k}$. As each element $\vec x$ of $D^k$ is the join of some elements of $U_1\cup\dots\cup U_k$, we have that $\Jir{D^k}=U_1\cup\dots\cup U_k\cong kU$. 

To prove that $\gmin{D^k}\geq \Asp(\Jir D,k)$, let $n:=\gmin{D^k}$ and pick an $n$-element generating set $\set{g_1,\dots,g_n}$ of $D^k$. 
 By \eqref{eq:smZstmrCvr}, we need to show that $k\leq \Sp(U,n)$. So, we need to embed $kU$ into $\nPow $. As $\FS n=\FS{x_1,\dots,x_n}$ is embedded into $\nPow $ and $kU\cong\Jir{D^k}$, it suffices to give an order embedding $\Jir{D^k}\to \FS n$. 
In the \emph{meet-semilattice reduct} $(D^k;\wedge)$ of the lattice $(D^k;\wedge,\vee)$, let $B:=\meetgen{g_1,\dots,g_n}$ denote the meet-subsemilattice generated by $\set{g_1,\dots,g_n}$. 
By the distributivity of the \emph{lattice} $D^k$, each $u\in\Jir{D^k}$ is obtained so that we apply a disjunctive normal form to the generators  $g_1,\dots,g_n$. That is, $u$ is the join of some meets of the generators. By the join-irreducibility of $u$, the join is superfluous, and so $u$ is the meet of some of the $g_1,\dots, g_n$. Thus, $u\in B$, and we have seen that $\Jir{D^k}\subseteq B$.
By the freeness of $\FS n$, there exists a (unique) meet homomorphism 
$\phi\colon \FS n \to B$ such that $\phi(x_i)=g_i$ for all $i\in\set{1,\dots,n}$. Since each of the generators $g_i$ of $B$ is a $\phi$-image, $\phi$ is surjective. Define a function $\psi\colon B\to \FS n$ by the rule $\psi(b):=\bigwedge\set{p\in \FS n: \phi(p)=b}$. Then, for every $b\in B$,  $\phi(\psi(b))= \phi\bigl(\bigwedge\set{p\in \FS n: \phi(p)=b}\bigr) = \bigwedge\set{\phi(p)\in \FS n: \phi(p)=b}=b$ shows that $\phi(\psi(b))=b$. Hence,  $\psi(b)$ is the least preimage of $b$ with respect to $\phi$.  
Now assume that $b_1, b_2\in B$.
If $b_1\leq b_2$, then $\phi(\psi(b_1)\wedge \psi(b_2))=\phi(\psi(b_1))\wedge \phi(\psi(b_2))=b_1\wedge b_2=b_1$ shows that $\psi(b_1)\wedge \psi(b_2)$ is a $\phi$-preimage of $b_1$. As $\psi(b_1)$ is the smallest preimage, we obtain that $\psi(b_1)\leq \psi(b_1)\wedge \psi(b_2)\leq \psi(b_2)$, that is, $\psi$ is order-preserving.
Conversely, if $\psi(b_1)\leq \psi(b_2)$, then $b_1=\phi(\psi(b_1))=\phi(\psi(b_1)\wedge \psi(b_2))= \phi(\psi(b_1))\wedge \phi(\psi(b_2))=b_1\wedge b_2\leq b_2$, whereby $\psi\colon B\to \FS n$ is an order-embedding. Restricting $\psi$ to $\Jir{D^k}$, we obtain an embedding of $\Jir{D^k}$ into $\FS n$, as required. Consequently, $\gmin{D^k}\geq \Asp(\Jir D,k)$.

To prove the converse inequality,  $\gmin{D^k}\leq \Asp(\Jir D,k)$, now we change the meaning of $n$ as follows:  let $n:=\Asp(\Jir D,k)$. We have to show that $D^k$ has an at most $n$-element generating set. Let $U:=\Jir D$; then $kU\cong \Jir{D^k}$ as in the first part of the proof. Furthermore, we know from \eqref{eq:rtrkfjnhTkzkc} that $kU$ is order embedded in $\nPow $. Since $k\geq 2$, $kU$ has no largest element. Thus, using that $\FS n$ is order isomorphic to $\nPow\setminus\set{[n]}$, 
$kU$ is also embedded in $\FS n=\FS{x_1,\dots,x_n}$. So we assume that $kU$ is a subposet of $\FS n$. 
A subset $X$ of $kU$ is called a \emph{down-set} of $kU$ if for every $y\in X$, $\pideal{kU} y\subseteq X$. The collection $\Dn{kU}=(\Dn{kU};\subseteq)$ of all down-sets of $kU$ is a distributive lattice.
Since $kU\cong \Jir{D^k}$, we obtain by the well-known structure theorem of finite distributive lattices, see  Gr\"atzer \cite[Theorem 107]{GLTFound} for example, that $\Dn{kU}\cong D^k$. Hence, it suffices to find an (at most) $n$-element generating set of $\Dn{kU}$.
For $i\in\set{1,\dots,n}$, define $Y_i:=\set{y\in kU: y\leq x_i \text{,  understood in }\FS n}$. Then $Y_i\in\Dn{kU}$, and we are going to show that $\set{Y_1,\dots,Y_n}$ generates $\Dn{kU}$. For every $X\in\Dn{kU}$, $X=\bigcup\set{\pideal{kU} y: y\in X}=\bigvee\set{\pideal{kU} y: y\in X}$.
Therefore (since the meet in $\Dn{kU}$ is the intersection), it suffices to show that for each $u\in kU$, $\pideal{kU}y=\bigcap\set{Y_i: u\in Y_i}$. 
The ``$\subseteq$'' inclusion here is trivial since the $Y_i$'s are down-sets. 
To verify the converse inclusion, assume that $v\in \bigcap\set{Y_i: u\in Y_i}$. This means that for all $i\in\set{1,\dots,n}$, if $u\in Y_i$, then $v\in Y_i$. In other words, for all $i\in\set{1,\dots,n}$, if $u\leq x_i$, then $v\leq x_i$. Thus, $v\leq \bigwedge\set{x_i: u\leq x_i}$. As each element of $\FS n$ is the meet of all elements above itself, $u=\bigwedge\set{x_i: u\leq x_i}$. By this equality and the  just-obtained inequality,  $v\leq u$, that is, $v\in\pideal{kU} u$.
This shows the ``$\supseteq$'' inclusion and completes the proof.
\end{proof}

\section{A Sperner type theorem}\label{sect:exact}
Let us repeat that a poset $U$ is  \emph{bounded} if $0=0_U\in U$ and $1=1_U\in U$. Even though we have not seen the following statement in the literature, all the tools needed in its proof are present in Lubell \cite{lubell},  Griggs, Stahl, and Trotter \cite{griggsatall}, and Dove and Griggs \cite{dovegriggs}; this is why we call it an observation rather than a theorem.

\begin{observation}\label{obs:nsTgtfblS} Let $U$ be a finite poset, let $n,k\in\Nnul$, and let $p:=\Asp(U,1)$,  that is, $p=\min\set{p'\in\Nnul: U\embeddable \Pow{[p']}}$. Then the following assertions hold.

\textup{(a)} If $n\geq p$, then  $\Sp(U,n)\geq \fsb(n-p)$.

\textup{(b)} If $k\geq 1$, then  $\Asp(U,k)\leq p + \afsb(k)$.

\textup{(c)} If $U$ is a bounded  and $n\geq p$, then $\Sp(U,n) =  \fsb(n-p)$, that is, $\Sp(U,n)=\ibinom {n-p}{\lint{(n-p)/2}}$. 

\textup{(d)} If $U$ is bounded and $k\geq 1$, then $\Asp(U,k) = p + \afsb(k)$.
\end{observation}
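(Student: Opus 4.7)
The plan is to obtain (a) and (b) as easy lower bounds via the standard Sperner-type construction, to reduce (c) to the Griggs--Stahl--Trotter theorem \eqref{eq:griggs} applied to the chain $\chain p$, and to get (d) from (c) via the adjunction developed in Lemma \ref{lemma:wBprznJTrh}.

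For (a), I would fix an order embedding $\phi\colon U \embeds \Pow{[p]}$ guaranteed by the definition of $p$, and pick a Sperner antichain $\set{A_1,\dots,A_m}$ of size $m=\fsb(n-p)$ in $\Pow{\set{p+1,\dots,n}}$. The translated copies $U_j:=\set{\phi(u)\cup A_j:u\in U}$, for $j=1,\dots,m$, are pairwise unrelated isomorphic copies of $U$ in $\nPow$: for $i\neq j$, any $x\in U_i$ and $y\in U_j$ satisfy $x\cap\set{p+1,\dots,n}=A_i$ and $y\cap\set{p+1,\dots,n}=A_j$, so $A_i\parallel A_j$ forces $x\parallel y$. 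For (b), I would feed $n:=p+\afsb(k)$ into (a): then $\fsb(n-p)\geq k$, so $\Sp(U,n)\geq k$, whence $\Asp(U,k)\leq n=p+\afsb(k)$ by \eqref{eq:smZstmrCvr}.

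The substance lies in (c). The lower bound comes from (a), so it suffices to prove the upper bound $\Sp(U,n)\leq\fsb(n-p)$ under the bounded assumption. Assume $U_1,\dots,U_k$ are pairwise unrelated copies of $U$ in $\nPow$, and let $B_i,T_i$ be the elements of $U_i$ that correspond to $0_U$ and $1_U$. Since $U$ is bounded, every element of $U_i$ lies in the interval $[B_i,T_i]\subseteq\nPow$, which is order-isomorphic to $\Pow{T_i\setminus B_i}$; hence $U$ embeds into $\Pow{T_i\setminus B_i}$, so minimality of $p$ forces $|T_i|-|B_i|\geq p$. I then pick any chain $B_i=X_0^{(i)}\subsetneq X_1^{(i)}\subsetneq\dots\subsetneq X_p^{(i)}\subseteq T_i$ of $p+1$ distinct subsets inside this interval. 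These $k$ chains form pairwise unrelated copies of $\chain p$: indeed, if $X_s^{(i)}\subseteq X_r^{(j)}$ for some $i\neq j$, then $B_i\subseteq X_s^{(i)}\subseteq X_r^{(j)}\subseteq T_j$, contradicting $B_i\parallel T_j$, which holds because $B_i\in U_i$ and $T_j\in U_j$ lie in unrelated copies of $U$. Applying \eqref{eq:griggs} to $\chain p$ when $p\geq 1$ (or Sperner's original theorem when $p=0$) gives $k\leq\fsb(n-p)$, which is the desired upper bound.

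Finally, (d) follows from (c) by adjunction: the exact formula $\Sp(U,n)=\fsb(n-p)$ for $n\geq p$ together with the trivial $\Sp(U,n)=0$ for $n<p$ makes $\Sp(U,-)$ a $p$-shift of $\fsb$, whose left adjoint is $k\mapsto p+\afsb(k)$ for $k\geq 1$ by Lemma \ref{lemma:wBprznJTrh}. The one step needing care is the chain-lifting in (c): one must check that the chains chosen in the separate intervals $[B_i,T_i]$ inherit the pairwise unrelatedness of the original $U_i$'s. This is immediate once one recalls that ``unrelated copies'' means full element-wise incomparability across two copies, so the key incomparability $B_i\parallel T_j$ for $i\neq j$ is available to rule out any comparability between the lifted chains.
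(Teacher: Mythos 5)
Your proposal is correct and follows essentially the same route as the paper: the translated-antichain construction for (a), the adjoint computation for (b), extracting $k$ pairwise unrelated copies of $\chain p$ between the images of $0_U$ and $1_U$ and invoking \eqref{eq:griggs} for (c), and the shift/adjunction argument for (d). The minor variations (placing $U$ in $\Pow{[p]}$ instead of the top $p$ coordinates, letting the chain end at some $X^{(i)}_p\subseteq T_i$ rather than exactly $T_i$, and handling $p=0$ via Sperner's theorem explicitly) are cosmetic and do not change the argument.
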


If $|U|=1$, then $p=0$. Hence, Sperner's Theorem, see \cite{sperner}, is a particular case of Theorem \ref{obs:nsTgtfblS}. Clearly, so is \eqref{eq:griggs}, which we quoted from Griggs, Stahl and Trotter \cite{griggsatall}. The forthcoming Table  \ref{tableWsmall}  shows  that  parts (c) and (d) would fail without assuming that $U$ is bounded.

\begin{proof} As we have already mentioned, all the ideas are taken from  Lubell \cite{lubell},  Griggs, Stahl, and Trotter \cite{griggsatall}, and Dove and Griggs \cite{dovegriggs}. 
To prove part \textup{(a)}, let  $B:=\set{n-p+1, n-p+2, \dots, n}$. As $|B|=p$ and we can replace $U$ with a poset isomorphic to it, we  assume that $U\subseteq \Pow{B}$. 
The  $\lint{(n-p)/2}$-element subsets of $\set{1,\dots,n-p}$ form a $k:=\fsb(n-p)$-element antichain $\Phi$ in $\Pow{[n-p]}$.
For $X_1,X_2\in \Phi$ and $Y_1,Y_2\in U$, if $X_1\neq X_2$, then some $i\in \set{1,\dots, n-p}$ is in $X_1\setminus X_2$ and so $i\in (X_1\cup Y_1)\setminus (X_2\cup Y_2)$. Hence,  $(\set{X\cup Y: X\in \Phi\text{ and }Y\in U};\subseteq) \cong (kU;\leq)$ is a subposet of $\nPow$. Thus, $\Sp(U,n)\geq k = \fsb(n-p)$, as required.  

To prove part \textup{(b)}, observe that for $k\geq 1$, part \textup{(a)} implies that
\begin{equation}
\set{n: p\leq n\in\Nnul\text{ and }k\leq \Sp(U,n)}\supseteq \set{n: p\leq n\in\Nnul\text{ and }k\leq \fsb(n-p)}.
\label{eq:kzLvlshRLGrtn}
\end{equation} 
Observe also that, by \eqref{eq:rtrkfjnhTkzkc}, $k\leq \Sp(U,n)\iff kU\embeddable \nPow$. Hence, we can compute as follows; note that \eqref{eq:kzLvlshRLGrtn} will be used only once.
\begin{align*}
\Asp(U,k)  \reqref{eq:rtrkfjnhTkzkc}
&\min\set{n: n\in\Nnul\text{ and }k\leq \Sp(U,n)} \cr
\overset{k\geq 1}=
&\min\set{n: p\leq n\in\Nnul\text{ and }k\leq \Sp(U,n)}\cr
\rleqref{eq:kzLvlshRLGrtn}
&\min\set{n: p\leq n\in\Nnul\text{ and }k\leq \fsb(n-p)}\cr
=&\min\set{p+n': n'\in\Nnul\text{ and }k\leq \fsb(n')}\cr
=p+{}&\min\set{n': n'\in\Nnul\text{ and }k\leq \fsb(n')}= p+\afsb(k),
\end{align*}
proving part \textup{(b)}.

To prove \textup{(c)}, assume that $U$ is bounded. It suffices to verify that $\Sp(U,n)\leq \fsb(n-p)$, which is the converse of the inequality proved for part \textup{(a)}.
With the notation $k:=\Sp(U,n)$, we know that there exists an order embedding $f\colon kU\to \nPow$. 
Let $U_1$, \dots, $U_k$ be the pairwise disjoint isomorphic copies of $U$
such that $kU$ is the union of them. For $i\in[k]$, denote the restriction of $f$ to $U_i$ by $f_i$, and  let $X_i:=f_i(1_{U_i})$ and $Z_i:=f_i(0_{U_i})$. Since the interval $[Z_i,X_i]=\set{Y\in \nPow: Z_i\subseteq Y\subseteq X_i}$ is order isomorphic to $\Pow{X_i\setminus Z_i}$, it follows that $|X_i\setminus Z_i|\geq p$.  
Hence, we can pick a chain $Z_i=Y^{(i)}_0\subset Y^{(i)}_1\subset\dots\subset Y^{(i)}_{p-1}\subset Y^{(i)}_p=X_i$. 
If we had that $Y^{(i)}_{s} \subseteq Y^{(j)}_t$ for some $i\neq j\in[k]$ and $s,t\in \set{0,\dots, p}$, then 
\begin{align*}
f(0_{U_i})&=
f_i(0_{U_i})= Z_i=Y^{(i)}_0\subseteq 
Y^{(i)}_s
\cr
&\subseteq Y^{(j)}_t  \subseteq  
Y^{(j)}_p=X_j=f_j(1_{U_j})=f(1_{U_j})
\end{align*}
and the fact that $f$ is an order embedding would imply that $0_{U_i}\leq 1_{U_j}$, which is a contradiction. Hence $Y^{(i)}_{s}$ and  $Y^{(j)}_t$ are incomparable for $i\neq j$. Therefore, 
letting $k\chain p=\bigcup_{i\in[k]}\set{y^{(i)}_0, y^{(i)}_1,\dots, y^{(i)}_p}$ with 
$y^{(i)}_0\prec y^{(i)}_1\prec\dots \prec y^{(i)}_p$,
the ``capitalizing map'' $k\chain p\to \nPow$ defined by $y^{(i)}_s\mapsto Y^{(i)}_s$ is an order embedding. Thus, it follows from Griggs, Stahl, and Trotter's result, quoted here in \eqref{eq:griggs}, that $\Sp(U,n)=k\leq \Sp(\chain p,n) =\fsb(n-p)$, as required. We have shown part \textup{(c)}.

To prove part \textup{(d)}, observe that in the argument for  \textup{(b)}, part \textup{(a)} yielded inequality \eqref{obs:nsTgtfblS}, which was used only once in the multi-line computation. Now that part \textup{(c)} turns  \eqref{obs:nsTgtfblS} into an equality, the  multi-line computation turns into a computation proving the required equality $\Asp(U,k)=p+\afsb(k)$, completing the proof.
\end{proof}

\section{Lower and upper estimates for non-bounded posets}\label{sect:loweruppergeneral}
For \emph{any} finite poset $U$, Dove and Griggs \cite{dovegriggs} and Katona and Nagy \cite{KatonaNagy}, independently from each other, 
gave lower estimates and upper estimates of $\Sp(U,n)$. Their estimates are asymptotically equal if $n$ tends to infinity.
Thus, $\Sp(U,n)$ is asymptotically known\footnote{When writing \href{https://arxiv.org/abs/2308.15625v2}{arXiv:2308.15625v2}, the earlier version of this paper, I did not know about Dove and Griggs \cite{dovegriggs} and Katona and Nagy \cite{KatonaNagy}; thank goes to D\'aniel Nagy (the second author of \cite{KatonaNagy})  to call my attention to these two papers.}{} for each $U$. In general, however, this  knowledge   does not give us too much information on $\Sp(U,n)$ for a \emph{small}  $n$.  By parsing the arguments in Dove and Griggs \cite{dovegriggs} or Katona and Nagy \cite{KatonaNagy}, one can obtain some estimates for a  small $n$ but sometimes, putting generality aside,  
other constructions could be easier and could give better estimates. This will be exemplified by two small \emph{concrete} posets; see Propositions \ref{prop:Wposet} and \ref{prop:Vposet}  later.  But first of all, let us agree that the set of all permutations of $[n]$ are denoted by $\Symn$; its members are written in the form $\vp=(\pi_1,\dots,\pi_n)$. For $\vp\in \Symn$, $j\in[n]$ and $X\in\nPow\setminus\set{\emptyset}$, we denote by 
\begin{align}
\iset(j,\vp):=\set{\pi_m: 1\leq m\leq j}, \text{  } \lpos(X,\vp):=\max\set{m\in [n]: \pi_m\in X},
\label{eq:sMhchSb}\\
\text{and }\,\Gamma(X):=\set{\vp\in\Symn:  \iset(\lpos(Z_i,\vp),\vp)\subseteq X}
\label{eq:jlhsvZrngLp}
\end{align}
the \emph{$j$-th \tbf initial \tbf set} of $\vp$,  the \emph{\tbf last \tbf position} of $X$ in $\vp$, and the \emph{set of permutations associated with} $X$,  respectively.
We let $\lpos(\emptyset,\vp):=0$ and  $\iset(0,\vp)=\emptyset$. Of course, we can change ``$\subseteq$'' in \eqref{eq:jlhsvZrngLp} into ``=''. 
The following statement is due to Lubell \cite{lubell} and (apart from terminological changes) was used succesfully by
Dove and Griggs \cite{dovegriggs}, Griggs, Stahl, and Trotter \cite{griggsatall}, and Katona and Nagy \cite{KatonaNagy}:
\begin{align}
&\text{if } X,Y\in\nPow\text{ such that } X\parallel Y,  \text{  then }  \Gamma(X)\cap \Gamma(Y)=\emptyset \text{ and}
\label{eq:rVncBjtRstvn}\\
&\text{for every }X\in\nPow, \text{ we have that }|\Gamma(X)|=|X|! \cdot (n-|X|)!.
\label{eq:mmdnHnRgkzgfclHsg}
\end{align}

\onlymiktex{
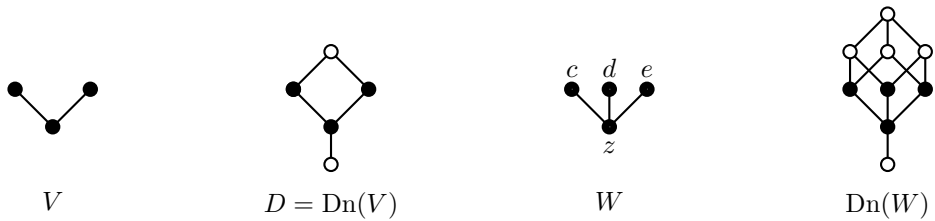
\begin{figure}[hbt]
\begin{tikzpicture}[scale=0.5]
\draw (0,2) coordinate (A);
\draw (1,1) coordinate (B);
\draw (1,0) coordinate (b);
\draw (2,2) coordinate (C);
\draw[thick](A)--(B); \draw[thick](C)--(B); 
\bcirc A; \bcirc C; \bcirc B;
\nodetxt b{$V$}; 
\draw (\dst+5,0) coordinate (D);
\draw (\dst+5,1) coordinate (E);
\draw (\dst+4,2) coordinate (F);
\draw (\dst+6,2) coordinate (G);
\draw (\dst+5,3) coordinate (H);
\draw[thick](D)--(E); \draw[thick](E)--(F); \draw[thick](E)--(G); 
\draw[thick](H)--(F); \draw[thick](H)--(G); 
\bcirc E; \bcirc F; \bcirc F; \bcirc G; \wcirc H;
\nodetxt D{$D=\Dn V$}; \wcirc D;
\draw(2*\dst+8,2) coordinate (K);
\draw (2*\dst+9,2) coordinate (L);
\draw (2*\dst+10,2) coordinate (M);
\draw (2*\dst+9,1) coordinate (J);
\draw (2*\dst+9,0) coordinate (j);
\draw[thick] (J)--(K); \draw[thick](J)--(L); \draw[thick](J)--(M); 
\bcirc K; \bcirc L; \bcirc M;
\bcirc J;
\nodetxt j{$W$};  \nodeuutxt K{$c$}; \nodeuutxt L{$d$}; \nodeuutxt M{$e$};
\draw [white,fill] (J) circle [radius=0.0pt] node [black,below=2pt] {$z$};
\draw (3*\dst+13,0) coordinate (N);
\draw (3*\dst+13,1) coordinate (O);
\draw (3*\dst+12,2) coordinate (P);
\draw (3*\dst+13,2) coordinate (Q);
\draw (3*\dst+14,2) coordinate (R);
\draw (3*\dst+12,3) coordinate (S);
\draw (3*\dst+13,3) coordinate (T);
\draw (3*\dst+14,3) coordinate (U);
\draw (3*\dst+13,4) coordinate (V);
\draw[thick](N)--(O); \draw[thick](O)--(P);  \draw[thick](O)--(R);   \draw[thick](O)--(Q);  \draw[thick](P)--(S); 
\draw[thick](P)--(T);  \draw[thick](Q)--(S);  \draw[thick](Q)--(U);  \draw[thick](R)--(T); \draw[thick](R)--(U); \draw[thick](S)--(V); \draw[thick](T)--(V); \draw[thick](U)--(V);
\bcirc O; \bcirc P; \bcirc Q; \bcirc R;
\wcirc S; \wcirc T; \wcirc U; \wcirc V; 
\nodetxt N{$\Dn W$};
\wcirc N; 
\end{tikzpicture}
\caption{Two posets and the corresponding distributive lattices}
\label{figone}
\end{figure}
}

Next, let $W$ denote  the 4-element poset $W$ with  0 and three maximal elements, see Figure \ref{figone}.
For  $n\in\Nplu$ we define  
\begin{align}
\upS(W,n)&:=\Bigl\lfloor\frac{n}
{3n-2-2\lint{n/2}}\cdot \fsb(n-1) \Bigr\rfloor.
\label{eq:cjWhlmgTlP}
\end{align}
and, with the convention that $\ibinom {n_1}{n_2}=0$  unless $0\leq n_2\leq n_1$, let
\begin{equation}
\loS(W,n):=
\begin{cases}\displaystyle{
\sum_{i=0}^{\lint{n/3}-1}  \sum_{j=0}^{i} 3^j \binom{i}{j} \binom{n-3i-3}{\lint{(n-1)/2}+j-3i}
}&\text{if }n\notin\set{3,5,7},\cr
\displaystyle{
\sum_{i=0}^{\lint{n/3}-1}  \sum_{j=0}^{i} 3^j \binom{i}{j}  \binom{n-3i-3}{{(n-3)/2}+j-3i}
}&\text{if }n\in\set{3,5,7}.
\end{cases}
\label{eq:rtrGtvZTlP}
\end{equation}
Note that $\loS(W,1)=\Sp(W,1)$ and $\loS(W,2)=\Sp(W,2)$. Hence, we can often assume that $n\geq 3$. The \emph{natural density} of a subset $X$ of $\Nplu$ is defined to be $\lim_{n\to\infty}|X\cap [n]|/n$, provided that this limit exists.

\begin{proposition}\label{prop:Wposet}
For $3\leq n\in\Nplu$,  $\upS(W,n)$  and $\loS(W,n)$ defined in \eqref{eq:cjWhlmgTlP}   and \eqref{eq:rtrGtvZTlP} are an upper estimate and a lower estimate of $\Sp(W,n)$, that is, $\loS(W,n)\leq \Sp(W,n)\leq \upS(W,n)$. The functions $\loS(W,-)$, $\Sp(W,-)$, $\upS(W,-)$, and $\frac 1 4\cdot \fsb(-)$ are asymptotically equal.  Furthermore, denoting the left adjoints of the functions $\loS(W,-)$ and $\upS(W,-)$ by $\loS^\ast(W,-)$ and $\upS^\ast(W,-)$, respectively, 
\begin{equation}
\upS^\ast(W,k)\leq \Asp(W,k) \leq \loS^\ast(W,k)\,\,\text{ and }\,\,0\leq \loS^\ast(W,k)-\upS^\ast(W,k)\leq 1
\label{eq:mLptpsGPs}
\end{equation}
for all $k\in\Nplu$ and, moreover, 
the natural density of the set $\set{k\in \Nplu: \upS^\ast(W,k)=\loS^\ast(W,k)}$ is $1$. 
\end{proposition}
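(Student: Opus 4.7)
The plan is to break the proposition into four claims and prove them in sequence: (A) the sandwich $\loS(W,n)\le\Sp(W,n)\le\upS(W,n)$; (B) the common asymptotic $\tfrac14\fsb(n)$; (C) the left-adjoint inequalities $\upS^\ast(W,k)\le\Asp(W,k)\le\loS^\ast(W,k)$ together with $\loS^\ast(W,k)-\upS^\ast(W,k)\le 1$; and (D) density one of the agreement set. For the lower bound in (A) I would build $\loS(W,n)$ pairwise unrelated copies of $W$ explicitly: reserve disjoint triples in $[n]$ to host the three-element antichains of maximal atoms of each copy, while the remaining coordinates supply a near-middle-level antichain to separate distinct copies. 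A copy is indexed by a choice of $i+1$ ``socket'' triples, of which $j$ contribute a genuine three-way atom choice (giving the factor $3^j\binom ij$) and the rest are fixed; the completion to the common target rank is counted by $\binom{n-3i-3}{\lint{(n-1)/2}+j-3i}$, and unrelatedness of distinct copies follows because their bottoms and atoms live in disjoint baskets. The exceptional small cases $n\in\set{3,5,7}$ require a downward shift of the target rank by one, which produces the second line of \eqref{eq:rtrGtvZTlP}.

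The upper bound in (A) I would obtain by a Lubell/LYM averaging over permutations, adapted from Dove--Griggs \cite{dovegriggs} and Katona--Nagy \cite{KatonaNagy} through the permutation notation \eqref{eq:sMhchSb}--\eqref{eq:mmdnHnRgkzgfclHsg}. Given pairwise unrelated copies $W_i=\set{z_i,c_i,d_i,e_i}$, I would assign to each $W_i$ the family of permutations of $[n]$ whose initial-segment structure witnesses both $z_i$ and some atom in $\set{c_i,d_i,e_i}$; by \eqref{eq:rVncBjtRstvn} these families are pairwise disjoint across distinct copies, and each contributes a known multinomial count via \eqref{eq:mmdnHnRgkzgfclHsg}. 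Minimizing the per-copy contribution over admissible rank patterns for $(|z_i|,|c_i|,|d_i|,|e_i|)$ and taking floors yields $\upS(W,n)$; the denominator $3n-2-2\lint{n/2}$ emerges as the rank-combinatorial optimum, with the two cases corresponding to the parity of $n$. For (B), Stirling's approximation combined with $\fsb(n-1)\sim\tfrac12\fsb(n)$ and $\tfrac{n}{3n-2-2\lint{n/2}}\to\tfrac12$ gives $\upS(W,n)\sim\tfrac14\fsb(n)$; for $\loS(W,n)$ the dominant double-sum summand near $i=\lint{n/3}-1$ and $j=i$ yields the same asymptotic, and squeezing through (A) brings $\Sp(W,n)$ along.

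For (C), the inequalities $\upS^\ast(W,k)\le\Asp(W,k)\le\loS^\ast(W,k)$ are immediate from (A) via \eqref{eq:srLtMksZkrmglDn}; the sharper gap $\loS^\ast(W,k)-\upS^\ast(W,k)\le 1$ is equivalent by \eqref{eq:smZstmrCvr} to the discrete inequality $\upS(W,n)\le\loS(W,n+1)$ for every $n\in\Nplu$. I would verify this directly from the closed forms: for large $n$, $\loS(W,n+1)\sim\tfrac14\fsb(n+1)\sim\tfrac12\fsb(n)$ while $\upS(W,n)\sim\tfrac14\fsb(n)$, giving factor-$2$ slack, and the finitely many small cases (including the exceptional $n\in\set{3,5,7}$) are tabulated by hand. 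For (D), write $f:=\upS(W,-)$ and $g:=\loS(W,-)$. Using $g(n)\ge f(n-1)$ from (C), the set $\set{k:f^\ast(k)=g^\ast(k)}$ is the disjoint union of the intervals $(f(n-1),g(n)]$, so within $[1,f(N)]$ the ``bad'' set has size $\sum_{n=1}^{N}(f(n)-g(n))$. The asymptotic equality from (B) gives $f(n)-g(n)=o(f(n))$, and combining with the exponential growth of $f$ (splitting the sum at $n=N/2$) shows this total is $o(f(N))$. Since every integer $K$ satisfies $K\asymp f(N)$ for the $N$ with $f(N-1)<K\le f(N)$, the natural density of the bad set is zero, so the agreement set has density one.

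The principal technical obstacle I anticipate is the discrete inequality $\upS(W,n)\le\loS(W,n+1)$ in part (C): although the asymptotic picture makes it plausible, the two closed forms are combinatorially rather different, and uniformity in $n$ across parities and across the three exceptional values of $n$ requires a careful case analysis even though each case is elementary.
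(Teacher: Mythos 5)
Your overall architecture (A)--(D) matches the paper's proof, and your treatment of (D) and of the conversion in (C) via \eqref{eq:smZstmrCvr} is essentially the paper's argument; however, two of your central steps contain genuine gaps. The most clear-cut one is in (B), in the asymptotics of $\loS(W,n)$. By \eqref{eq:lkVtnrmgrStJrlgT}, the $(i,j)$ summand of \eqref{eq:rtrGtvZTlP} is asymptotically $3^j\binom ij 2^{-3i-3}\fsb(n)$, so after summing over $j$ the $i$-th slice contributes about $\frac18\cdot 2^{-i}\fsb(n)$: the slices decay \emph{geometrically} in $i$, the largest single summand sits at $i=j=0$ and is worth only $\frac18\fsb(n)$, and the terms with $i$ near $\lint{n/3}-1$ are negligible, since there the upper entry $n-3i-3$ of the second binomial coefficient is bounded while $\fsb(n)$ grows like $2^n/\sqrt n$. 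Hence your claim that ``the dominant double-sum summand near $i=\lint{n/3}-1$ and $j=i$ yields the same asymptotic'' is false, and any single-summand analysis falls short of $\frac14\fsb(n)$ by a factor of $2$, which would break the asserted asymptotic equality of the four functions. One must sum the entire geometric series, which is exactly what the paper does with the truncation at $q$ and the $\eta,\mu$ bookkeeping culminating in \eqref{eq:nhMrlkRsvRm}.

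The second gap is in your upper bound. You propose to minimize the per-copy permutation count ``over admissible rank patterns for $(|z_i|,|c_i|,|d_i|,|e_i|)$'', but $|\Gamma(z_i)\cup\Gamma(c_i)\cup\Gamma(d_i)\cup\Gamma(e_i)|$ is \emph{not} a function of the ranks alone: the pairwise intersections entering inclusion--exclusion depend on how the three maximal sets overlap one another and the bottom. The paper therefore first proves a normalization lemma: using the convexity principle and a choice of three distinct elements $x_{C,\neg D,E}$, $x_{C,D,\neg E}$, $x_{\neg C,D,E}$, every system of pairwise unrelated copies may be replaced, copy by copy, by copies of the canonical shape \eqref{eq:kvStrSgkZlsM} (a bottom $Z_i$ with three singleton extensions); only then does the one-variable minimization of $g_0$ produce $M$ and the denominator $3n-2-2\lint{n/2}$. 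This reduction is a key idea missing from your sketch, and without it the stated closed form of $\upS(W,n)$ is not justified. Two smaller points: in your lower bound, distinct copies do \emph{not} live in ``disjoint baskets'' --- they overlap substantially, and unrelatedness instead rests on all bottoms having the common size $\qum$, as in \eqref{eq:mklfKcsBrzntRnld}, together with a genuine two-case analysis in which the case $i\neq\bulli$ needs a nontrivial counting computation; and in (C), a soft asymptotic comparison yields no explicit threshold, so ``tabulating the finitely many small cases'' is not available as stated --- the paper makes \eqref{eq:lNsGlRZndhBl} effective by bounding $\loS(W,n+1)$ from below by just two of its summands, $T(n+1)$, reducing the claim to polynomial positivity for $n\geq 10$ and checking smaller $n$ against Table \ref{tableWsmall}. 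You correctly flagged this last issue as an obstacle, but the failure in (B) and the missing normalization in the upper bound are the more serious defects.
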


The proof below uses lots from the proofs in Dove and Griggs \cite{dovegriggs} and Katona and Nagy \cite{KatonaNagy}; we are going to discuss the differences in Remark \ref{ref:diffDGKN}.

\begin{proof} First, we deal with $\upS(W,n)$.
Let $k:=\Sp(W,n)$, and let $W_1,\dots,W_k$ be pairwise unrelated copies of $W$ in $\nPow$. In particular, $(W_i,\subseteq)$ is order isomorphic to $W$. 
The assumption $n\geq 3$ gives that $\upS(W,n)\geq 1$. Thus, we can assume that $k\geq 2$ as otherwise $\Sp(W,n)=k\leq \upS(W,n)$ is obvious. 
In accordance with Figure \ref{figone}, we use the notation  $W_i=\set{Z_i,C_i,D_i,E_i}$ where $Z_i\subset C_i$, $C_i\parallel D_i$, etc., and $Z_i\parallel E_j$ for $i\neq  j$, etc. As it is trivial (and used also in  Dove and Griggs \cite{dovegriggs} and Katona and Nagy \cite{KatonaNagy}), if $i\neq j\in[k]$, $Y\in \nPow$, 
$Y',Y''\in W_i$, and $Y'\subseteq Y\subseteq Y''$, then 
$W_i\cup\set{Y}$ is still unrelated to $W_j$; we are going to use this ``convexity principle'' implicitly. As its first use, we can assume that $Z_i$ equals the intersection $C_i\cap D_i\cap E_i$ as otherwise we could replace $Z_i$ by this intersection.

We claim that with some pairwise distinct elements  $c_i,d_i,e_i\in[n]\setminus Z_i$, we can change $W_i$ to 
 $W_i'=\set{Z_i, Z_i\cup\set{c_i},  Z_i\cup\set{d_i},  Z_i\cup\set{e_i}}$ such that $W_1$, \dots, $W_{i-1}$, $W_i'$, $W_{i+1}$, \dots, $W_k$ 
still form a system of pairwise unrelated copies of $W$.
Let $C_i'=C_i\setminus  Z_i$, $D_i'=D_i\setminus  Z_i$,  and $E_i'=E_i\setminus  Z_i$. 
 If at least one of $C_i'$, $D_i'$ and $E_i'$ is not a subset of the union of the other two, say, 
 $C'_i\nsubseteq D'_i\cup E'_i$, then any choice of $c_i\in C_i'\setminus(D_i'\cup E_i')$,  $d_i\in D_i'\setminus E_i'$, and  $e_i\in E_i'\setminus D_i'$ does the job by the convexity principle.  So we can assume that each of $C_i'$, $D_i'$ and $E_i'$ is a subset of the union of the other two. Take an element from $C'_i\setminus D'_i$. As $C'_i\subseteq D'_i\cup E'_i$, this element is in $E'_i$; we denote it by  $x_{C,\neg D,E}$ .  The meaning of its  subscripts is that  $x_{C,\neg D,E}$ belongs to $C'_i$ and $E'_i$ but not to $D'_i$. By symmetry, we obtain elements  $x_{C, D,\neg E}\in (C_i\cap D_i)\setminus E_i$  and  $x_{\neg C, D,E}\in (D_i\cap E_i)\setminus C_i$.  The subscripts show that these three elements are pairwise distinct. This fact and the convexity principle imply that $W_i$ can be changed to the required form with $c_i:=x_{C,\neg D,E}$, $d_i:= x_{C, D,\neg E}$, and $e_i:=x_{\neg C, D,E}$. 
 Therefore, in the rest of the proof, we assume that for all $i\in[k]$, 
  \begin{equation}
W_i=\set{Z_i, Z_i\cup\set{c_i},  Z_i\cup\set{d_i},  Z_i\cup\set {e_i}}.
\label{eq:kvStrSgkZlsM}
\end{equation} 

Letting $\Gamma_i:=\Gamma(Z_i)\cup \Gamma(Z_i\cup\set{c_i}) \cup \Gamma(Z_i\cup\set{d_i})
\cup \Gamma(Z_i\cup\set{e_i})$, our next task is to find a reasonable lower bound on $|\Gamma_i|$. 
With the notation $z_i:=|Z_i|$, we can order the first $z_i$ compontens of a $\vp=(\pi_1,\dots,\pi_n)\in \Gamma(Z_i)\cap \Gamma(Z_i\cup \set{c_i})$, which form the set $Z_i$, in  $z!$ ways. We have that $\pi_{z_i+1}=c_i$, and the last $n-z_i-1$ components can be ordered in 
 $(n-z_i-1)!$ ways. Hence,  $|\Gamma(Z_i\cup \set{c_i})|= z_i!(n-z_i-1)!$, and the same is true for 
 $|\Gamma(Z_i\cup \set{d_i})|$ and  $|\Gamma(Z_i\cup \set{e_i})|$. This fact, 
\eqref{eq:rVncBjtRstvn}, \eqref{eq:mmdnHnRgkzgfclHsg}, and the inclusion–exclusion principle yield that 
\begin{equation}
\begin{aligned}
|\Gamma_i|&=g_0(z_i) \text{ where } g_0(x):=x!(n-x)! \cr
&+ 3(x+1)!(n-x-1)!  -3x!(n-x-1)! \cr
&= (n+2x_i)x_i!(n-1-x_i)!
\end{aligned}
\label{eq:slBsjmGsTlt}
\end{equation}
Note that $z_i\geq 1$ as otherwise $Z_i=\emptyset$ would be comparable with $Z_j$ for $j\in [k]\setminus \set i$. (Here we used that $k\geq 2$.) We also have that $z_i\leq n-1$ since $Z_i\cup\set{c_i}\in\nPow$. 
Thus, we can use later that $x\in [n-1]=\set{1,\dots,n-1}$.  
For  the auxiliary function $g_1(x):=g_0(x)-g_0(x-1)$, we have that 
$g_1(x)=g_2(x)\cdot x!(n-1-x)!$ where $g_2(x)=4x^2-2x-(n^2-2n)$. 
The smaller root of the quadratic equation $g_2(x)=0$ is negative while the larger one is strictly between $n/2-1/2$ and $n/2-1/4$. Hence the largest integer $x$ for which $g_2(x)$ and so $g_1(x)$ are negative is $x=\lint{(n-1)/2}$. Therefore, on the set $[n-1]$,  $g_0$ takes its minimum at $\lint{(n-1)/2}$. Let  $M:=g_0(\lint{(n-1)/2})$. Using that the $\Gamma_i$'s are pairwise disjoint by  \eqref{eq:rVncBjtRstvn} and  $\Gamma_1\cup\dots \Gamma_k\subseteq \Symn$, we obtain that 
\begin{equation}
kM=\sum_{i=1}^k M\leq \sum_{i=1}^|\Gamma_i|\leq |\Symn|=n!\, .
\label{eq:mTlThlrKtmklmSrrnd}
\end{equation}
Dividing this inequality by $M$ (and dealing with odd $n$'s and even $n$'-s separately), we obtain the required inequality $\Sp(W,n)=k\leq \upS(W,n)$.

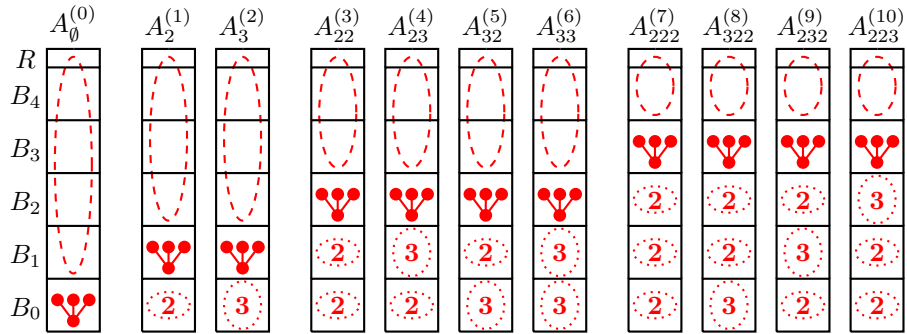
\begin{figure}[hbt]
\begin{tikzpicture}[scale=1.0]
\draw(0.6*\hot,\vot+0*\vot) coordinate (L0);
\draw(0.6*\hot,\vot+1*\het) coordinate (L1);
\draw(0.6*\hot,\vot+2*\vot) coordinate (L2);
\draw(0.6*\hot,\vot+3*\vot) coordinate (L3);
\draw(0.6*\hot,\vot+4*\vot) coordinate (L4);
\draw(0.6*\hot,\vot+4.77*\vot) coordinate (L5);
\nodeuutxt{L0}{$B_0$};
\nodeuutxt{L1}{$B_1$};
\nodeuutxt{L2}{$B_2$};
\nodeuutxt{L3}{$B_3$};
\nodeuutxt{L4}{$B_4$};
\nodeuutxt{L5}{$R$};
\draw(\hot,\vot) coordinate (a);
\draw(\hot+\wit,\vot) coordinate (b);
\draw(\hot,\vot+\het) coordinate (c);
\draw(\hot+\wit,\vot+\het) coordinate (d);
\draw(\hot,\vot+2*\het) coordinate (e);
\draw(\hot+\wit,\vot+2*\het) coordinate (f);
\draw(\hot,\vot+3*\het) coordinate (g);
\draw(\hot+\wit,\vot+3*\het) coordinate (h);
\draw(\hot,\vot+4*\het) coordinate (i);
\draw(\hot+\wit,\vot+4*\het) coordinate (j);
\draw(\hot,\vot+5*\het) coordinate (k);
\draw(\hot+\wit,\vot+5*\het) coordinate (l);
\draw(\hot,\vot+5.35*\het) coordinate (m);
\draw(\hot+0.5*\wit,\vot+5.35*\het) coordinate (s);
\draw(\hot+\wit,\vot+5.35*\het) coordinate (n);
\draw(\hot+0.5*\wit,\vot+0.2*\het) coordinate (o);
\draw(\hot+0.5*\wit,\vot+0.6*\het) coordinate (q);
\draw(\hot+0.2*\wit,\vot+0.6*\het) coordinate (p);
\draw(\hot+0.8*\wit,\vot+0.6*\het) coordinate (r);
\draw(\hot+0.5*\wit,\vot+3.15*\het) coordinate (ell);
\draw[thick,red]  (o)--(p);\draw[thick,red]  (o)--(q);
\draw[thick,red]  (o)--(r);
\draw[thick,red,dashed] (ell) ellipse (7pt and 41pt);
\draw[thick] (a)--(b)--(d)--(c)--(a);
\draw[thick]  (c)--(e)--(f)--(d);
\draw[thick]  (e)--(g)--(h)--(f);
\draw[thick]  (g)--(i)--(j)--(h);
\draw[thick]  (i)--(k)--(l)--(j);
\draw[thick]  (k)--(m)--(n)--(l);
\lcirc {o};\lcirc{q};\lcirc{p};\lcirc{r};
\nodeutxt{s}{$\fbx0{\emptyset}$};
%
%
\draw(1*\wit+2*\gap+\hot,\vot) coordinate (a);
\draw(1*\wit+2*\gap+\hot+\wit,\vot) coordinate (b);
\draw(1*\wit+2*\gap+\hot,\vot+\het) coordinate (c);
\draw(1*\wit+2*\gap+\hot+\wit,\vot+\het) coordinate (d);
\draw(1*\wit+2*\gap+\hot,\vot+2*\het) coordinate (e);
\draw(1*\wit+2*\gap+\hot+\wit,\vot+2*\het) coordinate (f);
\draw(1*\wit+2*\gap+\hot,\vot+3*\het) coordinate (g);
\draw(1*\wit+2*\gap+\hot+\wit,\vot+3*\het) coordinate (h);
\draw(1*\wit+2*\gap+\hot,\vot+4*\het) coordinate (i);
\draw(1*\wit+2*\gap+\hot+\wit,\vot+4*\het) coordinate (j);
\draw(1*\wit+2*\gap+\hot,\vot+5*\het) coordinate (k);
\draw(1*\wit+2*\gap+\hot+\wit,\vot+5*\het) coordinate (l);
\draw(1*\wit+2*\gap+\hot,\vot+5.35*\het) coordinate (m);
\draw(1*\wit+2*\gap+\hot+0.5*\wit,\vot+5.35*\het) coordinate (s);
\draw(1*\wit+2*\gap+\hot+\wit,\vot+5.35*\het) coordinate (n);
\draw(1*\wit+2*\gap+\hot+0.5*\wit,\vot+1.2*\het) coordinate (o);
\draw(1*\wit+2*\gap+\hot+0.5*\wit,\vot+1.6*\het) coordinate (q);
\draw(1*\wit+2*\gap+\hot+0.2*\wit,\vot+1.6*\het) coordinate (p);
\draw(1*\wit+2*\gap+\hot+0.8*\wit,\vot+1.6*\het) coordinate (r);
\draw(1*\wit+2*\gap+\hot+0.5*\wit,\vot+3.65*\het) coordinate (ell);
\draw(1*\wit+2*\gap+\hot+0.5*\wit,\vot+0.5*\het) coordinate (cp0);
\draw(1*\wit+2*\gap+\hot+0.5*\wit,\vot+1.5*\het) coordinate (cp1);
\draw(1*\wit+2*\gap+\hot+0.5*\wit,\vot+2.5*\het) coordinate (cp2);
\draw[thick,red]  (o)--(p);\draw[thick,red]  (o)--(q);
\draw[thick,red]  (o)--(r);
\draw[thick,red,dashed] (ell) ellipse (7pt and 31pt);
\draw[thick] (a)--(b)--(d)--(c)--(a);
\draw[thick]  (c)--(e)--(f)--(d);
\draw[thick]  (e)--(g)--(h)--(f);
\draw[thick]  (g)--(i)--(j)--(h);
\draw[thick]  (i)--(k)--(l)--(j);
\draw[thick]  (k)--(m)--(n)--(l);
\lcirc {o};\lcirc{q};\lcirc{p};\lcirc{r};
\nodeutxt{s}{$\fbx1{2}$};
\draw[thick,red,dotted] (cp0) ellipse (8pt and 5pt); \nodettxt{cp0}2;
%
\draw(2*\wit+3*\gap+\hot,\vot) coordinate (a);
\draw(2*\wit+3*\gap+\hot+\wit,\vot) coordinate (b);
\draw(2*\wit+3*\gap+\hot,\vot+\het) coordinate (c);
\draw(2*\wit+3*\gap+\hot+\wit,\vot+\het) coordinate (d);
\draw(2*\wit+3*\gap+\hot,\vot+2*\het) coordinate (e);
\draw(2*\wit+3*\gap+\hot+\wit,\vot+2*\het) coordinate (f);
\draw(2*\wit+3*\gap+\hot,\vot+3*\het) coordinate (g);
\draw(2*\wit+3*\gap+\hot+\wit,\vot+3*\het) coordinate (h);
\draw(2*\wit+3*\gap+\hot,\vot+4*\het) coordinate (i);
\draw(2*\wit+3*\gap+\hot+\wit,\vot+4*\het) coordinate (j);
\draw(2*\wit+3*\gap+\hot,\vot+5*\het) coordinate (k);
\draw(2*\wit+3*\gap+\hot+\wit,\vot+5*\het) coordinate (l);
\draw(2*\wit+3*\gap+\hot,\vot+5.35*\het) coordinate (m);
\draw(2*\wit+3*\gap+\hot+0.5*\wit,\vot+5.35*\het) coordinate (s);
\draw(2*\wit+3*\gap+\hot+\wit,\vot+5.35*\het) coordinate (n);
\draw(2*\wit+3*\gap+\hot+0.5*\wit,\vot+1.2*\het) coordinate (o);
\draw(2*\wit+3*\gap+\hot+0.5*\wit,\vot+1.6*\het) coordinate (q);
\draw(2*\wit+3*\gap+\hot+0.2*\wit,\vot+1.6*\het) coordinate (p);
\draw(2*\wit+3*\gap+\hot+0.8*\wit,\vot+1.6*\het) coordinate (r);
\draw(2*\wit+3*\gap+\hot+0.5*\wit,\vot+3.65*\het) coordinate (ell);
\draw(2*\wit+3*\gap+\hot+0.5*\wit,\vot+0.5*\het) coordinate (cp0);
\draw(2*\wit+3*\gap+\hot+0.5*\wit,\vot+1.5*\het) coordinate (cp1);
\draw(2*\wit+3*\gap+\hot+0.5*\wit,\vot+2.5*\het) coordinate (cp2);
\draw[thick,red]  (o)--(p); \draw[thick,red]  (o)--(q); \draw[thick,red]  (o)--(r);
\draw[thick,red,dashed] (ell) ellipse (7pt and 31pt);
\draw[thick] (a)--(b)--(d)--(c)--(a);
\draw[thick]  (c)--(e)--(f)--(d);
\draw[thick]  (e)--(g)--(h)--(f);
\draw[thick]  (g)--(i)--(j)--(h);
\draw[thick]  (i)--(k)--(l)--(j);
\draw[thick]  (k)--(m)--(n)--(l);
\lcirc {o};\lcirc{q};\lcirc{p};\lcirc{r};
\nodeutxt{s}{$\fbx2{3}$};
\draw[thick,red,dotted] (cp0) ellipse (7pt and 9pt); \nodettxt{cp0}3;
%
%
\draw(3*\wit+5*\gap+\hot,\vot) coordinate (a);
\draw(3*\wit+5*\gap+\hot+\wit,\vot) coordinate (b);
\draw(3*\wit+5*\gap+\hot,\vot+\het) coordinate (c);
\draw(3*\wit+5*\gap+\hot+\wit,\vot+\het) coordinate (d);
\draw(3*\wit+5*\gap+\hot,\vot+2*\het) coordinate (e);
\draw(3*\wit+5*\gap+\hot+\wit,\vot+2*\het) coordinate (f);
\draw(3*\wit+5*\gap+\hot,\vot+3*\het) coordinate (g);
\draw(3*\wit+5*\gap+\hot+\wit,\vot+3*\het) coordinate (h);
\draw(3*\wit+5*\gap+\hot,\vot+4*\het) coordinate (i);
\draw(3*\wit+5*\gap+\hot+\wit,\vot+4*\het) coordinate (j);
\draw(3*\wit+5*\gap+\hot,\vot+5*\het) coordinate (k);
\draw(3*\wit+5*\gap+\hot+\wit,\vot+5*\het) coordinate (l);
\draw(3*\wit+5*\gap+\hot,\vot+5.35*\het) coordinate (m);
\draw(3*\wit+5*\gap+\hot+0.5*\wit,\vot+5.35*\het) coordinate (s);
\draw(3*\wit+5*\gap+\hot+\wit,\vot+5.35*\het) coordinate (n);
\draw(3*\wit+5*\gap+\hot+0.5*\wit,\vot+2.2*\het) coordinate (o);
\draw(3*\wit+5*\gap+\hot+0.5*\wit,\vot+2.6*\het) coordinate (q);
\draw(3*\wit+5*\gap+\hot+0.2*\wit,\vot+2.6*\het) coordinate (p);
\draw(3*\wit+5*\gap+\hot+0.8*\wit,\vot+2.6*\het) coordinate (r);
\draw(3*\wit+5*\gap+\hot+0.5*\wit,\vot+4.15*\het) coordinate (ell);
\draw(3*\wit+5*\gap+\hot+0.5*\wit,\vot+0.5*\het) coordinate (cp0);
\draw(3*\wit+5*\gap+\hot+0.5*\wit,\vot+1.5*\het) coordinate (cp1);
\draw(3*\wit+5*\gap+\hot+0.5*\wit,\vot+2.5*\het) coordinate (cp2);
\draw[thick,red]  (o)--(p);\draw[thick,red]  (o)--(q); \draw[thick,red]  (o)--(r);
\draw[thick,red,dashed] (ell) ellipse (7pt and 21pt);
\draw[thick] (a)--(b)--(d)--(c)--(a);
\draw[thick]  (c)--(e)--(f)--(d);
\draw[thick]  (e)--(g)--(h)--(f);
\draw[thick]  (g)--(i)--(j)--(h);
\draw[thick]  (i)--(k)--(l)--(j);
\draw[thick]  (k)--(m)--(n)--(l);
\lcirc {o};\lcirc{q};\lcirc{p};\lcirc{r};
\nodeutxt{s}{$\fbx3{22}$};
\draw[thick,red,dotted] (cp0) ellipse (8pt and 5pt); \nodettxt{cp0}2;
\draw[thick,red,dotted] (cp1) ellipse (8pt and 5pt); \nodettxt{cp1}2;
%
\draw(4*\wit+6*\gap+\hot,\vot) coordinate (a);
\draw(4*\wit+6*\gap+\hot+\wit,\vot) coordinate (b);
\draw(4*\wit+6*\gap+\hot,\vot+\het) coordinate (c);
\draw(4*\wit+6*\gap+\hot+\wit,\vot+\het) coordinate (d);
\draw(4*\wit+6*\gap+\hot,\vot+2*\het) coordinate (e);
\draw(4*\wit+6*\gap+\hot+\wit,\vot+2*\het) coordinate (f);
\draw(4*\wit+6*\gap+\hot,\vot+3*\het) coordinate (g);
\draw(4*\wit+6*\gap+\hot+\wit,\vot+3*\het) coordinate (h);
\draw(4*\wit+6*\gap+\hot,\vot+4*\het) coordinate (i);
\draw(4*\wit+6*\gap+\hot+\wit,\vot+4*\het) coordinate (j);
\draw(4*\wit+6*\gap+\hot,\vot+5*\het) coordinate (k);
\draw(4*\wit+6*\gap+\hot+\wit,\vot+5*\het) coordinate (l);
\draw(4*\wit+6*\gap+\hot,\vot+5.35*\het) coordinate (m);
\draw(4*\wit+6*\gap+\hot+0.5*\wit,\vot+5.35*\het) coordinate (s);
\draw(4*\wit+6*\gap+\hot+\wit,\vot+5.35*\het) coordinate (n);
\draw(4*\wit+6*\gap+\hot+0.5*\wit,\vot+2.2*\het) coordinate (o);
\draw(4*\wit+6*\gap+\hot+0.5*\wit,\vot+2.6*\het) coordinate (q);
\draw(4*\wit+6*\gap+\hot+0.2*\wit,\vot+2.6*\het) coordinate (p);
\draw(4*\wit+6*\gap+\hot+0.8*\wit,\vot+2.6*\het) coordinate (r);
\draw(4*\wit+6*\gap+\hot+0.5*\wit,\vot+4.15*\het) coordinate (ell);
\draw(4*\wit+6*\gap+\hot+0.5*\wit,\vot+0.5*\het) coordinate (cp0);
\draw(4*\wit+6*\gap+\hot+0.5*\wit,\vot+1.5*\het) coordinate (cp1);
\draw(4*\wit+6*\gap+\hot+0.5*\wit,\vot+2.5*\het) coordinate (cp2);
\draw[thick,red]  (o)--(p);\draw[thick,red]  (o)--(q); \draw[thick,red]  (o)--(r);
\draw[thick,red,dashed] (ell) ellipse (7pt and 21pt);
\draw[thick] (a)--(b)--(d)--(c)--(a);
\draw[thick]  (c)--(e)--(f)--(d);
\draw[thick]  (e)--(g)--(h)--(f);
\draw[thick]  (g)--(i)--(j)--(h);
\draw[thick]  (i)--(k)--(l)--(j);
\draw[thick]  (k)--(m)--(n)--(l);
\lcirc {o};\lcirc{q};\lcirc{p};\lcirc{r};
\nodeutxt{s}{$\fbx4{23}$};
\draw[thick,red,dotted] (cp0) ellipse (8pt and 5pt); \nodettxt{cp0}2;
\draw[thick,red,dotted] (cp1) ellipse (7pt and 9pt); \nodettxt{cp1}3;
%
\draw(5*\wit+7*\gap+\hot,\vot) coordinate (a);
\draw(5*\wit+7*\gap+\hot+\wit,\vot) coordinate (b);
\draw(5*\wit+7*\gap+\hot,\vot+\het) coordinate (c);
\draw(5*\wit+7*\gap+\hot+\wit,\vot+\het) coordinate (d);
\draw(5*\wit+7*\gap+\hot,\vot+2*\het) coordinate (e);
\draw(5*\wit+7*\gap+\hot+\wit,\vot+2*\het) coordinate (f);
\draw(5*\wit+7*\gap+\hot,\vot+3*\het) coordinate (g);
\draw(5*\wit+7*\gap+\hot+\wit,\vot+3*\het) coordinate (h);
\draw(5*\wit+7*\gap+\hot,\vot+4*\het) coordinate (i);
\draw(5*\wit+7*\gap+\hot+\wit,\vot+4*\het) coordinate (j);
\draw(5*\wit+7*\gap+\hot,\vot+5*\het) coordinate (k);
\draw(5*\wit+7*\gap+\hot+\wit,\vot+5*\het) coordinate (l);
\draw(5*\wit+7*\gap+\hot,\vot+5.35*\het) coordinate (m);
\draw(5*\wit+7*\gap+\hot+0.5*\wit,\vot+5.35*\het) coordinate (s);
\draw(5*\wit+7*\gap+\hot+\wit,\vot+5.35*\het) coordinate (n);
\draw(5*\wit+7*\gap+\hot+0.5*\wit,\vot+2.2*\het) coordinate (o);
\draw(5*\wit+7*\gap+\hot+0.5*\wit,\vot+2.6*\het) coordinate (q);
\draw(5*\wit+7*\gap+\hot+0.2*\wit,\vot+2.6*\het) coordinate (p);
\draw(5*\wit+7*\gap+\hot+0.8*\wit,\vot+2.6*\het) coordinate (r);
\draw(5*\wit+7*\gap+\hot+0.5*\wit,\vot+4.15*\het) coordinate (ell);
\draw(5*\wit+7*\gap+\hot+0.5*\wit,\vot+0.5*\het) coordinate (cp0);
\draw(5*\wit+7*\gap+\hot+0.5*\wit,\vot+1.5*\het) coordinate (cp1);
\draw(5*\wit+7*\gap+\hot+0.5*\wit,\vot+2.5*\het) coordinate (cp2);
\draw[thick,red]  (o)--(p);\draw[thick,red]  (o)--(q); \draw[thick,red]  (o)--(r);
\draw[thick,red,dashed] (ell) ellipse (7pt and 21pt);
\draw[thick] (a)--(b)--(d)--(c)--(a);
\draw[thick]  (c)--(e)--(f)--(d);
\draw[thick]  (e)--(g)--(h)--(f);
\draw[thick]  (g)--(i)--(j)--(h);
\draw[thick]  (i)--(k)--(l)--(j);
\draw[thick]  (k)--(m)--(n)--(l);
\lcirc {o};\lcirc{q};\lcirc{p};\lcirc{r};
\nodeutxt{s}{$\fbx5{32}$};
\draw[thick,red,dotted] (cp1) ellipse (8pt and 5pt); \nodettxt{cp1}2;
\draw[thick,red,dotted] (cp0) ellipse (7pt and 9pt); \nodettxt{cp0}3;
%
%
\draw(6*\wit+8*\gap+\hot,\vot) coordinate (a);
\draw(6*\wit+8*\gap+\hot+\wit,\vot) coordinate (b);
\draw(6*\wit+8*\gap+\hot,\vot+\het) coordinate (c);
\draw(6*\wit+8*\gap+\hot+\wit,\vot+\het) coordinate (d);
\draw(6*\wit+8*\gap+\hot,\vot+2*\het) coordinate (e);
\draw(6*\wit+8*\gap+\hot+\wit,\vot+2*\het) coordinate (f);
\draw(6*\wit+8*\gap+\hot,\vot+3*\het) coordinate (g);
\draw(6*\wit+8*\gap+\hot+\wit,\vot+3*\het) coordinate (h);
\draw(6*\wit+8*\gap+\hot,\vot+4*\het) coordinate (i);
\draw(6*\wit+8*\gap+\hot+\wit,\vot+4*\het) coordinate (j);
\draw(6*\wit+8*\gap+\hot,\vot+5*\het) coordinate (k);
\draw(6*\wit+8*\gap+\hot+\wit,\vot+5*\het) coordinate (l);
\draw(6*\wit+8*\gap+\hot,\vot+5.35*\het) coordinate (m);
\draw(6*\wit+8*\gap+\hot+0.5*\wit,\vot+5.35*\het) coordinate (s);
\draw(6*\wit+8*\gap+\hot+\wit,\vot+5.35*\het) coordinate (n);
\draw(6*\wit+8*\gap+\hot+0.5*\wit,\vot+2.2*\het) coordinate (o);
\draw(6*\wit+8*\gap+\hot+0.5*\wit,\vot+2.6*\het) coordinate (q);
\draw(6*\wit+8*\gap+\hot+0.2*\wit,\vot+2.6*\het) coordinate (p);
\draw(6*\wit+8*\gap+\hot+0.8*\wit,\vot+2.6*\het) coordinate (r);
\draw(6*\wit+8*\gap+\hot+0.5*\wit,\vot+4.15*\het) coordinate (ell);
\draw(6*\wit+8*\gap+\hot+0.5*\wit,\vot+0.5*\het) coordinate (cp0);
\draw(6*\wit+8*\gap+\hot+0.5*\wit,\vot+1.5*\het) coordinate (cp1);
\draw(6*\wit+8*\gap+\hot+0.5*\wit,\vot+2.5*\het) coordinate (cp2);
\draw[thick,red]  (o)--(p);\draw[thick,red]  (o)--(q); \draw[thick,red]  (o)--(r);
\draw[thick,red,dashed] (ell) ellipse (7pt and 21pt);
\draw[thick] (a)--(b)--(d)--(c)--(a);
\draw[thick]  (c)--(e)--(f)--(d);
\draw[thick]  (e)--(g)--(h)--(f);
\draw[thick]  (g)--(i)--(j)--(h);
\draw[thick]  (i)--(k)--(l)--(j);
\draw[thick]  (k)--(m)--(n)--(l);
\lcirc {o};\lcirc{q};\lcirc{p};\lcirc{r};
\nodeutxt{s}{$\fbx6{33}$};
\draw[thick,red,dotted] (cp0) ellipse (7pt and 9pt); \nodettxt{cp0}3;
\draw[thick,red,dotted] (cp1) ellipse (7pt and 9pt); \nodettxt{cp1}3;
%
\draw(7*\wit+10*\gap+\hot,\vot) coordinate (a);
\draw(7*\wit+10*\gap+\hot+\wit,\vot) coordinate (b);
\draw(7*\wit+10*\gap+\hot,\vot+\het) coordinate (c);
\draw(7*\wit+10*\gap+\hot+\wit,\vot+\het) coordinate (d);
\draw(7*\wit+10*\gap+\hot,\vot+2*\het) coordinate (e);
\draw(7*\wit+10*\gap+\hot+\wit,\vot+2*\het) coordinate (f);
\draw(7*\wit+10*\gap+\hot,\vot+3*\het) coordinate (g);
\draw(7*\wit+10*\gap+\hot+\wit,\vot+3*\het) coordinate (h);
\draw(7*\wit+10*\gap+\hot,\vot+4*\het) coordinate (i);
\draw(7*\wit+10*\gap+\hot+\wit,\vot+4*\het) coordinate (j);
\draw(7*\wit+10*\gap+\hot,\vot+5*\het) coordinate (k);
\draw(7*\wit+10*\gap+\hot+\wit,\vot+5*\het) coordinate (l);
\draw(7*\wit+10*\gap+\hot,\vot+5.35*\het) coordinate (m);
\draw(7*\wit+10*\gap+\hot+0.5*\wit,\vot+5.35*\het) coordinate (s);
\draw(7*\wit+10*\gap+\hot+\wit,\vot+5.35*\het) coordinate (n);
\draw(7*\wit+10*\gap+\hot+0.5*\wit,\vot+3.2*\het) coordinate (o);
\draw(7*\wit+10*\gap+\hot+0.5*\wit,\vot+3.6*\het) coordinate (q);
\draw(7*\wit+10*\gap+\hot+0.2*\wit,\vot+3.6*\het) coordinate (p);
\draw(7*\wit+10*\gap+\hot+0.8*\wit,\vot+3.6*\het) coordinate (r);
\draw(7*\wit+10*\gap+\hot+0.5*\wit,\vot+4.65*\het) coordinate (ell);
\draw(7*\wit+10*\gap+\hot+0.5*\wit,\vot+0.5*\het) coordinate (cp0);
\draw(7*\wit+10*\gap+\hot+0.5*\wit,\vot+1.5*\het) coordinate (cp1);
\draw(7*\wit+10*\gap+\hot+0.5*\wit,\vot+2.5*\het) coordinate (cp2);
\draw[thick,red]  (o)--(p);\draw[thick,red]  (o)--(q); \draw[thick,red]  (o)--(r);
\draw[thick,red,dashed] (ell) ellipse (7pt and 11pt);
\draw[thick] (a)--(b)--(d)--(c)--(a);
\draw[thick]  (c)--(e)--(f)--(d);
\draw[thick]  (e)--(g)--(h)--(f);
\draw[thick]  (g)--(i)--(j)--(h);
\draw[thick]  (i)--(k)--(l)--(j);
\draw[thick]  (k)--(m)--(n)--(l);
\lcirc {o};\lcirc{q};\lcirc{p};\lcirc{r};
\nodeutxt{s}{$\fbx7{222}$};
\draw[thick,red,dotted] (cp0) ellipse (8pt and 5pt); \nodettxt{cp0}2;
\draw[thick,red,dotted] (cp1) ellipse (8pt and 5pt); \nodettxt{cp1}2;
\draw[thick,red,dotted] (cp2) ellipse (8pt and 5pt); \nodettxt{cp2}2;
%
%
\draw(8*\wit+11*\gap+\hot,\vot) coordinate (a);
\draw(8*\wit+11*\gap+\hot+\wit,\vot) coordinate (b);
\draw(8*\wit+11*\gap+\hot,\vot+\het) coordinate (c);
\draw(8*\wit+11*\gap+\hot+\wit,\vot+\het) coordinate (d);
\draw(8*\wit+11*\gap+\hot,\vot+2*\het) coordinate (e);
\draw(8*\wit+11*\gap+\hot+\wit,\vot+2*\het) coordinate (f);
\draw(8*\wit+11*\gap+\hot,\vot+3*\het) coordinate (g);
\draw(8*\wit+11*\gap+\hot+\wit,\vot+3*\het) coordinate (h);
\draw(8*\wit+11*\gap+\hot,\vot+4*\het) coordinate (i);
\draw(8*\wit+11*\gap+\hot+\wit,\vot+4*\het) coordinate (j);
\draw(8*\wit+11*\gap+\hot,\vot+5*\het) coordinate (k);
\draw(8*\wit+11*\gap+\hot+\wit,\vot+5*\het) coordinate (l);
\draw(8*\wit+11*\gap+\hot,\vot+5.35*\het) coordinate (m);
\draw(8*\wit+11*\gap+\hot+0.5*\wit,\vot+5.35*\het) coordinate (s);
\draw(8*\wit+11*\gap+\hot+\wit,\vot+5.35*\het) coordinate (n);
\draw(8*\wit+11*\gap+\hot+0.5*\wit,\vot+3.2*\het) coordinate (o);
\draw(8*\wit+11*\gap+\hot+0.5*\wit,\vot+3.6*\het) coordinate (q);
\draw(8*\wit+11*\gap+\hot+0.2*\wit,\vot+3.6*\het) coordinate (p);
\draw(8*\wit+11*\gap+\hot+0.8*\wit,\vot+3.6*\het) coordinate (r);
\draw(8*\wit+11*\gap+\hot+0.5*\wit,\vot+4.65*\het) coordinate (ell);
\draw(8*\wit+11*\gap+\hot+0.5*\wit,\vot+0.5*\het) coordinate (cp0);
\draw(8*\wit+11*\gap+\hot+0.5*\wit,\vot+1.5*\het) coordinate (cp1);
\draw(8*\wit+11*\gap+\hot+0.5*\wit,\vot+2.5*\het) coordinate (cp2);
\draw[thick,red]  (o)--(p);\draw[thick,red]  (o)--(q); \draw[thick,red]  (o)--(r);
\draw[thick,red,dashed] (ell) ellipse (7pt and 11pt);
\draw[thick] (a)--(b)--(d)--(c)--(a);
\draw[thick]  (c)--(e)--(f)--(d);
\draw[thick]  (e)--(g)--(h)--(f);
\draw[thick]  (g)--(i)--(j)--(h);
\draw[thick]  (i)--(k)--(l)--(j);
\draw[thick]  (k)--(m)--(n)--(l);
\lcirc {o};\lcirc{q};\lcirc{p};\lcirc{r};
\nodeutxt{s}{$\fbx8{322}$};
\draw[thick,red,dotted] (cp0) ellipse (7pt and 9pt); \nodettxt{cp0}3;
\draw[thick,red,dotted] (cp1) ellipse (8pt and 5pt); \nodettxt{cp1}2;
\draw[thick,red,dotted] (cp2) ellipse (8pt and 5pt); \nodettxt{cp2}2;
%
%
\draw(9*\wit+12*\gap+\hot,\vot) coordinate (a);
\draw(9*\wit+12*\gap+\hot+\wit,\vot) coordinate (b);
\draw(9*\wit+12*\gap+\hot,\vot+\het) coordinate (c);
\draw(9*\wit+12*\gap+\hot+\wit,\vot+\het) coordinate (d);
\draw(9*\wit+12*\gap+\hot,\vot+2*\het) coordinate (e);
\draw(9*\wit+12*\gap+\hot+\wit,\vot+2*\het) coordinate (f);
\draw(9*\wit+12*\gap+\hot,\vot+3*\het) coordinate (g);
\draw(9*\wit+12*\gap+\hot+\wit,\vot+3*\het) coordinate (h);
\draw(9*\wit+12*\gap+\hot,\vot+4*\het) coordinate (i);
\draw(9*\wit+12*\gap+\hot+\wit,\vot+4*\het) coordinate (j);
\draw(9*\wit+12*\gap+\hot,\vot+5*\het) coordinate (k);
\draw(9*\wit+12*\gap+\hot+\wit,\vot+5*\het) coordinate (l);
\draw(9*\wit+12*\gap+\hot,\vot+5.35*\het) coordinate (m);
\draw(9*\wit+12*\gap+\hot+0.5*\wit,\vot+5.35*\het) coordinate (s);
\draw(9*\wit+12*\gap+\hot+\wit,\vot+5.35*\het) coordinate (n);
\draw(9*\wit+12*\gap+\hot+0.5*\wit,\vot+3.2*\het) coordinate (o);
\draw(9*\wit+12*\gap+\hot+0.5*\wit,\vot+3.6*\het) coordinate (q);
\draw(9*\wit+12*\gap+\hot+0.2*\wit,\vot+3.6*\het) coordinate (p);
\draw(9*\wit+12*\gap+\hot+0.8*\wit,\vot+3.6*\het) coordinate (r);
\draw(9*\wit+12*\gap+\hot+0.5*\wit,\vot+4.65*\het) coordinate (ell);
\draw(9*\wit+12*\gap+\hot+0.5*\wit,\vot+0.5*\het) coordinate (cp0);
\draw(9*\wit+12*\gap+\hot+0.5*\wit,\vot+1.5*\het) coordinate (cp1);
\draw(9*\wit+12*\gap+\hot+0.5*\wit,\vot+2.5*\het) coordinate (cp2);
\draw[thick,red]  (o)--(p);\draw[thick,red]  (o)--(q); \draw[thick,red]  (o)--(r);
\draw[thick,red,dashed] (ell) ellipse (7pt and 11pt);
\draw[thick] (a)--(b)--(d)--(c)--(a);
\draw[thick]  (c)--(e)--(f)--(d);
\draw[thick]  (e)--(g)--(h)--(f);
\draw[thick]  (g)--(i)--(j)--(h);
\draw[thick]  (i)--(k)--(l)--(j);
\draw[thick]  (k)--(m)--(n)--(l);
\lcirc {o};\lcirc{q};\lcirc{p};\lcirc{r};
\nodeutxt{s}{$\fbx9{232}$};
\draw[thick,red,dotted] (cp0) ellipse (8pt and 5pt); \nodettxt{cp0}2;
\draw[thick,red,dotted] (cp1) ellipse (7pt and 9pt); \nodettxt{cp1}3;
\draw[thick,red,dotted] (cp2) ellipse (8pt and 5pt); \nodettxt{cp2}2;
%
%
\draw(10*\wit+13*\gap+\hot,\vot) coordinate (a);
\draw(10*\wit+13*\gap+\hot+\wit,\vot) coordinate (b);
\draw(10*\wit+13*\gap+\hot,\vot+\het) coordinate (c);
\draw(10*\wit+13*\gap+\hot+\wit,\vot+\het) coordinate (d);
\draw(10*\wit+13*\gap+\hot,\vot+2*\het) coordinate (e);
\draw(10*\wit+13*\gap+\hot+\wit,\vot+2*\het) coordinate (f);
\draw(10*\wit+13*\gap+\hot,\vot+3*\het) coordinate (g);
\draw(10*\wit+13*\gap+\hot+\wit,\vot+3*\het) coordinate (h);
\draw(10*\wit+13*\gap+\hot,\vot+4*\het) coordinate (i);
\draw(10*\wit+13*\gap+\hot+\wit,\vot+4*\het) coordinate (j);
\draw(10*\wit+13*\gap+\hot,\vot+5*\het) coordinate (k);
\draw(10*\wit+13*\gap+\hot+\wit,\vot+5*\het) coordinate (l);
\draw(10*\wit+13*\gap+\hot,\vot+5.35*\het) coordinate (m);
\draw(10*\wit+13*\gap+\hot+0.5*\wit,\vot+5.35*\het) coordinate (s);
\draw(10*\wit+13*\gap+\hot+\wit,\vot+5.35*\het) coordinate (n);
\draw(10*\wit+13*\gap+\hot+0.5*\wit,\vot+3.2*\het) coordinate (o);
\draw(10*\wit+13*\gap+\hot+0.5*\wit,\vot+3.6*\het) coordinate (q);
\draw(10*\wit+13*\gap+\hot+0.2*\wit,\vot+3.6*\het) coordinate (p);
\draw(10*\wit+13*\gap+\hot+0.8*\wit,\vot+3.6*\het) coordinate (r);
\draw(10*\wit+13*\gap+\hot+0.5*\wit,\vot+4.65*\het) coordinate (ell);
\draw(10*\wit+13*\gap+\hot+0.5*\wit,\vot+0.5*\het) coordinate (cp0);
\draw(10*\wit+13*\gap+\hot+0.5*\wit,\vot+1.5*\het) coordinate (cp1);
\draw(10*\wit+13*\gap+\hot+0.5*\wit,\vot+2.5*\het) coordinate (cp2);
\draw[thick,red]  (o)--(p);\draw[thick,red]  (o)--(q); \draw[thick,red]  (o)--(r);
\draw[thick,red,dashed] (ell) ellipse (7pt and 11pt);
\draw[thick] (a)--(b)--(d)--(c)--(a);
\draw[thick]  (c)--(e)--(f)--(d);
\draw[thick]  (e)--(g)--(h)--(f);
\draw[thick]  (g)--(i)--(j)--(h);
\draw[thick]  (i)--(k)--(l)--(j);
\draw[thick]  (k)--(m)--(n)--(l);
\lcirc {o};\lcirc{q};\lcirc{p};\lcirc{r};
\nodeutxt{s}{$\fbx{10}{223}$};
\draw[thick,red,dotted] (cp0) ellipse (8pt and 5pt); \nodettxt{cp0}2;
\draw[thick,red,dotted] (cp1) ellipse (8pt and 5pt); \nodettxt{cp1}2;
\draw[thick,red,dotted] (cp2) ellipse (7pt and 9pt); \nodettxt{cp2}3;
%
%
\end{tikzpicture}
\caption{Copies of $A:=[16]$. Here $|B_0|=\dots=|B_4|=3$. 
In the copy $\fbx0{\emptyset}$, the oval stands for a 7-element subset. In each other copies,  the total number of elements in the ovals is also $7$.}
\label{figtwo}
\end{figure}

Next, we turn our attention to $\loS(W,n)$. Let $m:=\lint{n/3}$. For $n\notin\set{3,5,7}$, let $\qum:=\lint{(n-1)/2}$. For 
 $n \in\set{3,5,7}$,  $\qum$ stands for ${(n-3)/2}$. With $A:=[n]$, let us fix pairwise disjoint 3-element subsets $B_0$, $B_1$, \dots, $B_{m-1}$ of $A$, and denote the ``remainder set'' $A\setminus(B_0\cup \dots\cup B_{m-1})$ by $R$. 
These subsets are visualized in Figure \ref{figtwo}, where $n=16$,  $m=5$, $\qum=7$, and $A$ with subscripts and superscripts is drawn eleven times (in four groups separated by spaces). We can assume that $n\geq 3$. For $i\in\set{0,\dots,m-1}$, the elements of $B_i$ are denoted as follow: $B_i=\set{c_i,d_i,e_i}$.
For $i\in \Nnul$, call a 
vector $\vv=(v_0,\dots,v_{i-1})\in\set{2,3}^i$
\emph{eligible} if $i\leq m-1$ and 
$v_0+v_1+\dots+v_{i-1}\leq \qum$.   Note that for $i=0$, the empty vector is denoted by $\emptyset$ and it is eligible. As Figure \ref{figtwo} shows, there are exactly eleven eligible vectors for $n=16$; they  are the lower subscripts of the copies of $A$; because of space consideration, we write $232$ instead of $(2,3,2)$, etc., in the figure. (The upper subscripts of $A$ help to count the copies but play no other role.)

For each eligible $\vv$, we define a family of copies of $W$ in $\nPow =\Pow A$ as follows. Let $i$ denote the dimension of $\vv$, that is, $\vv=(v_0,\dots,v_{i-1})$. 
 For $j=0,\dots, i-1$, pick a $v_j$-element subset $X_j$ of $B_j$. In the figure, $X_j$ is denoted by a dotted oval with $v_j$ sitting in its middle.  
 Furthermore, pick a subset $X_i$ of $A\setminus (B_{0}\cup B_{1}\cup\dots\cup B_{i})$ such that $X_i=  \qum - v_0-\dots-v_{i-1}$. In the figure, $X_i$ is the dashed oval (without any number in its middle).  Let us emphasize that $X_j\subseteq B_j$ holds only for $j<i$ but it never holds for $j=i$.
Denote $(X_0, X_1, \dots, X_{i})$ by $\vX$, call it an \emph{eligible set vector}, and let $Z_{\vX}:=X_0\cup \dots \cup X_i$.  Clearly,
\begin{equation}
\parbox{7.5cm}{regardless the choice of $\vv$ and $\vX$, we have that $|Z_{\vX}|$ is always the same, namely, $ |Z_{\vX}|=\qum$.}
\label{eq:mklfKcsBrzntRnld}
\end{equation}
For convenience, let $\setc_i:=\set{c_i}$, $\setd_i:=\set{d_i}$, $\sete_i:=\set{e_i}$, and $\setz_i:=\emptyset$. 
Observe that $\set{\setc_i, \setd_i, \sete_i,\setz_i}$ is a copy of $W$ in $\Pow{B_i}$; in each copy of $A$ in the figure, this copy of $W$ is indicated by its diagram for exactly one $i$. 
It follows that 
\begin{equation}
\begin{aligned}
&W_{\vX}:=\set{ z_{\vX},\,\,   c_{\vX},\,\,   d_{\vX},\,\,   e_{\vX} }\text{, where }  \cr
&z_{\vX}:=\setz_i\cup Z_{\vX}=Z_{\vX},\,\,   c_{\vX}:=\setc_i\cup Z_{\vX},\,\,   d_{\vX}:=\setd_i\cup Z_{\vX},\,\,   e_{\vX}:=\sete_i\cup Z_{\vX}, 
\end{aligned}
\end{equation}
is also a copy of $W$ but now in $\Pow A=\nPow$.

To prove that $\loS(W,n)\leq \Sp(W,n)$, we need to show that $\loS(W,n)$ is the number of eligible set vectors $\vX$ and for distinct eligible set vectors $\vX\neq \vpX$, the corresponding copies $W_{\vX}$ and $W_{\vpX}$ of $W$ are unrelated. 

First, we deal with the number of eligible set vectors $\vX=(X_0,\dots, X_i)$. As each of the $B_j$'s are 3-element and there are $\lint{n/3}$ many of them, the largest value of $i$ is at most $\lint{n/3}-1$, the upper limit of the outer summation index in \eqref{eq:rtrGtvZTlP}. The eligible vector $\vv$ that gives rise to $\vX$ is uniquely determined by $\vX$ since $\vv=(|X_0|,\dots,|X_{i-1}|)$. Let $j:=|\set{t\in\set{0,\dots, i-1}: v_t=2}|$. This $j$, which corresponds to the inner summation index in \eqref{eq:rtrGtvZTlP},  is the number of $2$'s in dotted ovals in the figure. 
There are $\tbinom{i}{j}$ possibilities to choose the $j$-element set $\set{t\in\set{0,\dots, i-1}: v_t=2}$; this is where the first binomial coefficient enters into \eqref{eq:rtrGtvZTlP}. For each $t\in\set{0,\dots, i-1}$ such that $v_t=2$,  we can choose the 2-element subset $X_t$ of $B_t$ in 3 ways. As there are $j$ such $t$'s, this brings the power $3^j$ into \eqref{eq:rtrGtvZTlP}.  
Since $X_i$ is a subset of the $n-3i-3$-element set $A\setminus (B_0\cup\dots\cup B_i)$ and 
\begin{align*}
|X_i|=\qum-v_0-\dots-v_{i-1}
=\qum-2j-3(i-j)=\qum+j-3i\,,
\end{align*}
the second binomial coefficient in \eqref{eq:rtrGtvZTlP} gives how many ways we can choose $X_i$. 
Therefore, \eqref{eq:rtrGtvZTlP} precisely gives the number of eligible set vectors $\vX$.

Next, assume that $\vX=(X_0,\dots, X_i)$ and $\vpX=(\bullX_0,\dots,\bullX_{\bulli})$ are distinct eligible set vectors with corresponding (not necessarily different) eligible vectors $\vv=(v_0,\dots,v_{i-1})$ and $\vpv=(\bullv_0,\dots,\bullv_{\bulli-1})$. Assume also that $\sima$ and $\bulla$ are in $W$ such that 
$(\vX,\sima)\neq (\vpX,\bulla)$. 
We need to show that  $\sima_{\vX}=\seta_i\cup  Z_X$ and $\bulla_{\vpX}= \bullseta_i\cup   Z_{\vpX}$
are incomparable. 
There are two cases to consider; both can  easily be followed by keeping an eye on Figure \ref{figtwo} in addition to the formal argument.

First, assume that $i\neq \bulli$, say, $i<\bulli$.  Observe that 
$|\bulla_{\vpX} \cap B_i|=|\bullX_i|=\bullv_i\geq 2$ but $|a_{\vX} \cap B_i|=|\seta_i|\leq 1$. So $|\bulla_{\vpX} \cap B_i| > |a_{\vX} \cap B_i|$.
(Pictorially, a dotted oval, labeled by 2 or 3, has more element than $|\seta_i|$ symbolized by one of the vertices of the diagram of W drawn in $B_i$.) Hence,  $\bulla_{\vpX}\nsubseteq a_{\vX}$. For the sake of contradiction, suppose that  $a_{\vX} \subseteq \bulla_{\vpX}$.
Then for every $j\in\set{0,\dots,i-1}$,
$v_j=|B_j\cap a_{\vX}| \leq |B_j\cap \bulla_{\vpX} |=\bullv_j$. Hence, we can compute as follows; the computation is motivated by comparing, say, $\fbx1{2}$ and $\fbx9{232}$ in Figure \ref{figtwo}:
\begin{align*}
|a_{\vX} &\cap (B_{i+1}\cup\dots \cup B_{m-1}\cup R)|=|X_i| \cr
&= \qum - v_0-\dots-v_{i-1} \geq  \qum - \bullv_0-\dots-\bullv_{i-1}\cr
&=  (\qum - \bullv_0-\dots-\bullv_{\bulli-1}) +
(\bullv_{i+1}+\dots+ \bullv_{\bulli-1}) + |\bullseta_{\bulli}| + (\bullv_i -  |\bullseta_{\bulli}| ) \cr
&=|\bullX_{\bulli}| + (|\bullX_{i+1}|+\dots+|\bullX_{\bulli-1}|  + |\bullseta_{\bulli}|) + (\bullv_i -  |\bullseta_{\bulli}| )\cr
&=|\bulla_{\vpX} \cap (B_{i+1}\cup\dots \cup B_{m-1}\cup R)| +  (\bullv_i -  |\bullseta_{\bulli}| ) \cr
&> |\bulla_{\vpX} \cap (B_{i+1}\cup\dots \cup B_{m-1}\cup R)|.
\end{align*}
The strict inequality just obtained contradicts that $a_{\vX} \subseteq \bulla_{\vpX}$, and we conclude in the first case that $a_{\vX}\parallel \bulla_{\vpX}$, as required.

Second, assume that $i =\bulli$. If 
$X_j\parallel \bullX_j$ for some $j\in \set{0,\dots,i}$ or  there are $s,t\in\set{0,\dots,i}$ such that\footnote{According to the convention of lattice theory, ``$\subset$'' is the conjunction of ``$\subseteq$'' and ``$\neq$''.} 
$X_s\subset \bullX_s$ but $X_t\supset \bullX_t$, then 
the validity of  $a_{\vX}\parallel \bulla_{\vpX}$ is clear. 
Thus, we can assume that $X_j\subseteq \bullX_j$ for all $j\in\set{0,\dots, i}$. Then 
 \[
 \qum=|X_0|+\dots+|X_i|\leq |\bullX_0|+\dots+|\bullX_i| = \qum
 \]
together with $|X_j| \leq |\bullX_j|$, for all $j\in\set{0,\dots, i}$, 
imply that $X_j=\bullX_j$ for all $j\in\set{0,\dots,i}$. Combining this equality with 
 $X_j\subseteq \bullX_j$ for all $j\in\set{0,\dots, i}$, we obtain that  $\vX=\vpX$,  contradicting our assumption.  We have shown that $\loS(W,n)\leq \Sp(W,n)$, as required.

It is well known that no matter how we fix two integers $s$ and $t$, 
\begin{equation}
\binom{n-s}{\lint{n/2}-t}\text{ is asymptotically } 2^{-s}\binom n{\lint{n/2}}=2^{-s}\fsb(n)\text{ if }n\to\infty;
\label{eq:lkVtnrmgrStJrlgT}
\end{equation} 
this folkloric (and trivial) fact was used in Dove and Griggs \cite{dovegriggs} and Katona and Nagy \cite{KatonaNagy}, too. This fact and \eqref{eq:cjWhlmgTlP} yield that $\upS(W,-)$ is asymptotically $\frac14 \fsb(-)$. Hence, to obtain the required asymptotic equations, it suffices to show that 
$\loS(W,-)$ is asymptotically $\frac14 \fsb(-)$, too. 
Let $\eta$ and $\mu$ be small positive real numbers.
As $\sum_{i=0}^\infty 2^{-i}=2$, we can fix 
a $q\in\Nplu$ such that  $\sum_{i=0}^{q} 2^{-i}\geq 2- \eta$. Using \eqref{eq:lkVtnrmgrStJrlgT} and assuming that $i\leq q$, we obtain that the second binomial coefficient in \eqref{eq:rtrGtvZTlP} asymptotically  $2^{-3i}\fsb(n-3)$ or, rather, it is $\frac18\cdot 2^{-3i}\fsb(n)$. So it is at least $\frac18\cdot 8^{-i}(1-\mu)\fsb(n)$ for all but finitely many $n$. Hence, assuming $n$ is large enough and, in particular, $\lint{n/3} >q$, 
\allowdisplaybreaks{
\begin{align}
\loS(W,n)&\geq \sum_{i=0}^q\sum_{j=0}^i 3^j \binom ij \cdot 8^{-i}\cdot \frac 18 (1-\mu)\fsb(n)\cr
&= \frac18 (1-\mu)\fsb(n) \sum_{i=0}^q 8^{-i} \sum_{j=0}^i \binom ij  3^j \cdot 1^{i-j}\cr
&=\frac18 (1-\mu)\fsb(n) \sum_{i=0}^q 8^{-i} (3+1)^i\cr
&\geq \frac18 (1-\mu)\fsb(n)(2-\eta)=\frac{(2-\eta)(1-\mu)}8 \fsb(n).
\label{eq:nhMrlkRsvRm}
\end{align}}
As the  last fraction in \eqref{eq:nhMrlkRsvRm} can be arbitrarily close to $1/4$, it follows that  $\loS(W,n)$  is asymptotically at least $\frac 14\fsb(n)$. It is asymptotically at most  $\frac 14\fsb(n)$ since so is $\upS(W,n)$ and we  know that $ \loS(W,n)\leq \Sp(W,n)\leq \upS(W,n)$.  This completes the argument proving the ``asymptotically equal'' part of Proposition \ref{prop:Wposet}.

Next, we turn our attention to the left adjoints of our estimates. First of all, we claim that 
\begin{equation}
\text{for every }n\in\Nplu, \quad \upS(W,n)\leq \loS(W,n+1).
\label{eq:lNsGlRZndhBl}
\end{equation}
Let $\uppS(W,-)$ be the same as $\upS(W,-)$ except that we drop the outer ``lower integer part'' function from its definition. It suffices to prove \eqref{eq:lNsGlRZndhBl} with $\uppS(W,n+1)$ instead of $\upS(W,n+1)$.  We can assume that $n\geq 10$ as otherwise \eqref{eq:lNsGlRZndhBl} is clear by Table \ref{tableWsmall}\footnote{\label{foot:ryzen}The table was obtained by the computer algebraic program Maple V Release 5, which ran  on a desktop computer with AMD Ryzen 7 2700X Eight-Core Processor 3.70 GHz for  $\frac 15$ seconds.}. Let $T(n)$ denote the sum of the two summands in the upper line of \eqref{eq:rtrGtvZTlP} that correspond to $(i,j)=(0,0)$ and $(i,j)=(1,1)$.  After a straightforward but  tedious calculation, if $n=2m$, then 
\begin{equation} \frac{\uppS(W,n)}{\loS(W,n+1)} \leq  \frac{\uppS(W,n)}{T(n+1)}=   \frac{2m(2m-2)(2m-3) } {(m-1)^2(11m-12)}.
\label{eq:kRsmkmVlglDhGk}
\end{equation}
Subtracting the numerator from the denominator, we obtain
$3m^3-14m^2+23m-12$, which is clearly nonnegative for $5\leq m\in \Nplu$ (in fact, for all $m\in\Nplu$), whence  
the fraction is at most 1 for $n=2m\geq 10$.
For and odd $n=2n+1 \geq 10$, \eqref{eq:kRsmkmVlglDhGk} turns into 
\begin{equation*} \frac{\uppS(W,n)}{\loS(W,n+1)} \leq  \frac{\uppS(W,n)}{T(n+1)}=   \frac{4(2m+1)(2m-1)(2m-3)} {(4m+1)(11m^2-19m+6)}\,,
\end{equation*}
and now the subtraction gives the polynomial $12m^3-17m^2+13m-6$, which is clearly nonnegative for $2\leq m\in\Nplu$ (in fact, for all $m\in\Nplu$). Thus, passing from $m$ to $n$, the required inequality  $\uppS(W,n)\leq \loS(W,n+1)$ holds for all $10\leq n\in\Nplu$.   We have shown the validity of \eqref{eq:lNsGlRZndhBl}.

\begin{table}
\allowdisplaybreaks{
\begin{gather*}
\begin{tabular}{l|r|r|r|r|r|r|r|r|r|r|r|r|r}
$n$ &  3& 4& 5& 6& 7& 8& 9 & 10 & 11 & 12 & 13 &14 \cr
\hline
$\loS(W,n)$ &\textcircled{\raisebox{-1pt}1}&\textcircled {\raisebox{-1pt}1}&\textcircled {\raisebox{-1pt}2}& \textcircled {\raisebox{-1pt}6}&9&17&36&66&120 & 234& 456& 876\cr
\hline
$\upS(W,n)$ &1&2 &3&6 &10&20&37&70&132&252&480& 924\cr
\hline
\end{tabular}
\\
 \begin{tabular}{l|r|r|r|r|r|r|r|r}
$n$ &  15& 16& 17& 18& 19& 20& 21\cr
\hline
$\loS(W,n)$ & 1\,680&3\,625& 6\,340 &12\,330&23\,960&46\,766&91\,224\cr
\hline
$\upS(W,n)$ & 1\,775&3\,432& 6\,630&12\,870&24\,967&48\,620&94\,631\cr
\hline
\end{tabular}
\\
\begin{tabular}{l|r|r|r|r|r|r}
$n$ &  22& 23& 24& 25& 26\cr
\hline
$\loS(W,n)$ &178\,388&348\,656&683\,130&1\,337\,896\,&2\,625\,364\cr
\hline
$\upS(W,n)$ &184\,756&360\,554&705\,432&1\,379\,671&2\,704\,156\cr
\hline
\end{tabular} 
\\
\begin{tabular}{l|r|r|r|r|r}
$n$ &  27& 28& 29& 30\cr
\hline
$\loS(W,n)$ &5\,149\,872&10\,119\,348&19\,877\,904&39\,104\,856\cr
\hline
$\upS(W,n)$ &5\,298\,418&10\,400\,600&20\,410\,200&40\,116\,600\cr
\end{tabular}
\end{gather*}
}%
\caption{Some values of $\loS(W,n)$ and $\upS(W,n)$; the \emph{known} values of $\Sp(W,n)$ are encircled.}
\label{tableWsmall}
\end{table}

Next, we deal with \eqref{eq:mLptpsGPs}. By Table \ref{tableWsmall}, the first few values of $\upS^\ast(W,k)$ and those of  $\loS^\ast(W,k)$ are as follows:
\begin{equation}
\begin{tabular}{l|r|r|r|r|r|r|r|r|r|r|r|r|r|r|r|r}
$k$ & 1& 2& 3& 4&5&6&7&8&9&10&11&12&13&14&15\cr
\hline
$\upS^\ast(W,k)$ &3&4&5&6&6&6&7&7&7 &7&8&8&8&8&8 \cr
\hline
$\loS^\ast(W,j)$ & 3&5&6&6& 6& 6& 7& 7& 7&8&8&8&8&8&8\cr
\end{tabular}
\label{eq:mFljZrstlWlFj}
\end{equation}
This implies  \eqref{eq:mLptpsGPs} for $k\leq 15$ (in fact, for $k\leq 29$),  so we can assume that $k>15$. Using \eqref{eq:mFljZrstlWlFj} and the obvious fact that $\loS(W,-)$ is a strictly increasing function on $\Nplu\setminus[7]$, there is a unique $7\leq n\in\Nplu$ such that 
\begin{equation}
 \loS(W,n)<k\leq \loS(W,n+1).
\end{equation}
Using  \eqref{eq:lNsGlRZndhBl} and the inequality $\loS(W,n)\leq \upS(W,n)$, we obtain that 
\begin{align}
\text{either }\loS(W,n)<  k \leq \upS(W,n)
\label{eq:rvzhbkRkgpRc}\\
\text{or }  \upS(W,n)<k\leq \loS(W,n+1).
\label{eq:nHnGrcgZlN}
\end{align}
If  \eqref{eq:rvzhbkRkgpRc},  then 
$\upS^\ast(V,k)=n$ and $\loS^\ast(W,k)=n+1$.
If \eqref{eq:nHnGrcgZlN}, then $\upS^\ast(W,k)=n+1=\loS^\ast(V,k)$.
In both cases, $0\leq \loS^\ast(W,k)-\upS^\ast(W,k)\leq 1$, as required.

Next, for $t\in \Nplu$, let $E_t:=\set{k\in [t]: \upS^\ast(W,k)< \loS^\ast(W,k)}$.
To settle the last sentence of Proposition \ref{prop:Wposet} about density,  it suffices to show that $\lim_{t\to\infty} (|E_t|/t) = 0$.  
Let $\epsilon<1/12$ be a  positive real number; we are going to show that $|E_t|/t<\epsilon$ for all but finitely many $t$'s. Asymptotic equalities will often be denoted by ``$\sim$''. As we have already proved that $\loS(W,-)\sim\frac 14\fsb(-)$, \eqref{eq:lkVtnrmgrStJrlgT} yields that $\upS(W,n-1)/\upS(W,n)\to 1/4$ as $n\to \infty$. 
This fact, $1/6<1/4<3^{-1}$,  and   $\loS(W,n)\sim\upS(W,n)$ allow us to fix an $n_0=n_0(\epsilon)\in \Nplu$ such that for all $n\geq n_0$, 
\begin{align}
&\upS(W,n)/6< \upS(W,n-1)< \upS(W,n)\cdot3^{-1}
\,\,\text{ and }
\label{eq:mRgKrblFcsK}
\\
&\upS(W,n) - \loS(W,n)<
\upS(W,n)\cdot  \epsilon / 12\,\,.
\label{eq:sZgfJbkrtSj}
\end{align} 
Later, it will be important that  $n_0$ does not depend on $t$. Hence, from now on, we can assume that $\upS(W,n_0)<t$.  Since $\lim_{i\to\infty}\upS(n_0+i)=\infty$ in a strictly increasing way, there exists a unique
$r=r(t)\in \Nplu$ such that $\upS(n_0+r-1)<t\leq \upS(n_0+r)$. Since $\epsilon$ is small, \eqref{eq:mRgKrblFcsK} and \eqref{eq:sZgfJbkrtSj} yield that  for all $i\in[r]$,   
\begin{align}
&\underbrace{\upS(W,n_0+i-1)< \loS(W,n_0+i)}_{\text{long good interval}}\leq   \upS(W,n_0+i)  
\label{eq:bkpCskNcrZ}\\
&\text{and }\quad
\upS(W,n_0+r) / 6 < t\,.
\label{eq:rCtnBtkvTgngRck}
\end{align}

Observe that \eqref{eq:bkpCskNcrZ} and $\loS(W,n_0+i-1)\leq \upS(W,n_0+i-1)$ imply that  for every $k$ in the left open and right closed  interval $\bigl(\upS(W,n_0+i-1), \loS(W,n_0+i)\bigr]$, which is under-braced in \eqref{eq:bkpCskNcrZ},  $\loS^\ast(W,k)=\upS^\ast(W,k)=n_0+i$. So this interval is disjoint from $|E_t|$ for any $t\in\Nplu$. 
Thus, letting $c:=\upS(W,n_0)$ and $d:=\upS(W,n_0+r)$,  $E_t$ is a subset of $[1,c]\cup\bigcup_{i\in[r]} \bigl(\loS(W,n_0+i), \upS(W,n_0+i)\bigr]$. Hence, 
\begin{align*}|&E_t|\leq c+\sum_{i\in[r]}\bigl(\upS(W,n_0+i) - \loS(W,n_0+i)\bigr)
\cr
&
\rleqref{eq:sZgfJbkrtSj}
 c+\frac\epsilon {12} \cdot \sum_{i\in[r]} \upS(W,n_0+i) 
=
c+\frac\epsilon {12} \cdot \sum_{i\in\set{0,\dots,r-1}} \upS(W,n_0+r-i)
\cr
&\rleqref{eq:mRgKrblFcsK} c+\frac\epsilon {12}   \sum_{i\in\set{0,\dots,r-1}}3^{-i}d 
\leq c+\frac{\epsilon d} {12}  \sum_{i\in \Nnul}3^{-i} = c+ \frac{\epsilon d} {12}\cdot\frac 4 3 = c+\frac{\epsilon d}9.
\end{align*} 
This inequality and \eqref{eq:rCtnBtkvTgngRck} yield that $|E_t|/t\leq |E_t|/\bigl(\upS(W,n_0+r)/6\bigr) \leq (c+\epsilon d/9)/(d/6)
=6c/d + 2\epsilon/3$. As $t\to \infty$, $r=r(t)$ and  $d=\upS(W,n_0+r)$ also tend to $\infty$. So for all sufficiently large $t$, we have that $6c/d<\epsilon/3$, whereby  $|E_t|/t\leq \epsilon/3 + 2\epsilon/3=\epsilon$. 
Thus, $0\leq |E_t|/t <\epsilon$ for all  but finitely many $t$, and this is true for every positive $\epsilon \leq 1/12$.  
That is,  $\lim_{t\in\Nplu} |E_t|/t =0$. Hence, the natural density of $E$ is 0 and that of $\Nplu\setminus E$, which occurs in Proposition \ref{prop:Wposet}, is 1, as required. 
The proof or Proposition \ref{prop:Wposet} is complete.
\end{proof}

\begin{remark}[Differences from \cite{dovegriggs} and  \cite{KatonaNagy}]\label{ref:diffDGKN} The differences we are going to summarize here are partly due to the fact that, naturally, more can be proved for a small particular poset  than for all finite posets. When proving that $\Sp(W,n)\leq \upS(W,n)$,  the only novelty is the argument between \eqref{eq:slBsjmGsTlt} and \eqref{eq:mTlThlrKtmklmSrrnd}. More novelty occurs in our proof of $\loS(W,p)\leq \Sp(W,n)$.  
As opposed to Dove and Griggs \cite{dovegriggs},where several ``layers'' are populated,  we use no iteration and we have \eqref{eq:mklfKcsBrzntRnld}. Compared to Katona and Nagy \cite{KatonaNagy}, our construction performs better for small values of $n$; the following table shows what lower estimates could be extracted from \cite{KatonaNagy}.
\[
\begin{matrix}
n&10&50&100
\cr\hline
\text{by \cite{KatonaNagy}:}&21& 14\,833\,897\,694\,226 & 
12\,229\,253\,884\,310\,811\,313\,310\,605\,728
\cr
\hline
\loS(W,n):&66& 31\,761\,385\,392\,516 &
25\,286\,044\,048\,404\,745\,303\,553\,386\,716
\end{matrix}
\]
 (We have no similar numerical comparison in case of \cite{dovegriggs}.) Except for \eqref{eq:knknRswt}, which is  quoted from \cite{KatonaNagy} and does not apply for  $W$,  \cite{dovegriggs} and 
 \cite{KatonaNagy} give only asymptotic results but no concrete values of $\Sp(U,n)$ for a poset $U$. 
\end{remark}

\begin{remark}\label{rem:lRglnfn}
Even for a small $n$, the trivial algorithm for determining $\Sp(W,n)$ is far from being feasible. For example, for $n=10$,  the ``cover-preserving'' copies of $W$ in $\Pow{[10]}$ form a  $\sum_{i=0}^7\ibinom{10}{i}\cdot\ibinom{10-i}{3}=15\,360$-element set $\mathcal H$. All the 
 $(\Sp(W,10)+1)$-element subsets of $\mathcal H$  should be excluded, but    no computer can exclude $\ibinom{15\,360}{\Sp(10)+1}\geq \ibinom{15\,360}{67}\geq 10^{185}$ subsets; the first inequality here comes from  Table \ref{tableWsmall}.
\end{remark}

We have investigated another small poset, too; it is the 3-element non-chain poset $V$; see Figure \ref{figone}. Define
\begin{align}
\loS(V,n):=\sum_{i=0}^{\lint{\uint{(n-2)/2}/2}}
\binom{n-2-2i}{\uint{(n-2)/2}-2i}\quad \text{ and}
\label{eq:slsblTlP}
\\
\upS(V,n):=\Bigl(1+ \frac{2n-3\lint{n/2}-1}
{2n-\lint{n/2}-1}\Bigr)\cdot \binom{n-2}{\lint{(n-2)/2}}.
\label{eq:cssnblTlP}
\end{align}

\nothing{up to n=311, the shift parameter $b$ is optimally chosen!}

\nothing{For n=2023, the computation took 221 seconds. }

\begin{proposition}
[Mostly from Katona and Nagy \cite{KatonaNagy}]\label{prop:Vposet}
For $2\leq n\in\Nplu$,  Proposition \ref{prop:Wposet} remains valid if we substitute $V$ and $\frac13\fsb(-)$ for $W$ and $\frac14\fsb(-)$, respectively. 
\end{proposition}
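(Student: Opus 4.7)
The plan is to follow the proof of Proposition \ref{prop:Wposet}, with modifications reflecting that $V$ has two maximal elements rather than three. For the upper bound $\Sp(V,n) \leq \upS(V,n)$, I would first apply the convexity principle to reduce to pairwise unrelated copies of the form $V_i = \{Z_i, Z_i \cup \{c_i\}, Z_i \cup \{d_i\}\}$ with $c_i \neq d_i$ in $[n]\setminus Z_i$; since $k := \Sp(V,n) \geq 2$, the bottom $Z_i$ is nonempty, so $z_i := |Z_i| \in [n-1]$. Setting $\Gamma_i := \Gamma(Z_i) \cup \Gamma(Z_i \cup \{c_i\}) \cup \Gamma(Z_i \cup \{d_i\})$ and using \eqref{eq:mmdnHnRgkzgfclHsg} (with the observation that the two singleton-extensions cannot appear simultaneously as initial segments of a permutation, so their $\Gamma$-sets are disjoint), inclusion-exclusion yields $|\Gamma_i| = (n + z_i) \cdot z_i! \cdot (n - z_i - 1)!$. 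Writing $g_0(x) := (n+x) x! (n-x-1)!$, the difference $g_0(x) - g_0(x-1) = (x-1)!(n-x-1)! \cdot (2x^2 + (n-1)x - n(n-1))$ and a discriminant check show that $g_0$ attains its minimum on $[n-1]$ at $x^\ast = \lint{(n-1)/2}$. Dividing $n!$ by $g_0(x^\ast)$ and simplifying for the parities of $n$ separately then recovers \eqref{eq:cssnblTlP}.

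For the lower bound $\loS(V,n) \leq \Sp(V,n)$, I would use disjoint $2$-element blocks in place of the $3$-element blocks of the $W$ construction. Fix disjoint blocks $B_0 = \{c_0,d_0\}, \ldots, B_{\lint{n/2}-1} = \{c_{\lint{n/2}-1}, d_{\lint{n/2}-1}\}$ in $[n]$ and set $h := \uint{(n-2)/2} = \lint{(n-1)/2}$. For each $i \in \{0, \ldots, \lint{h/2}\}$ and each $(h - 2i)$-element subset $X_i$ of $[n] \setminus (B_0 \cup \ldots \cup B_i)$, form
\[
V_{(i, X_i)} := \{Z, Z \cup \{c_i\}, Z \cup \{d_i\}\}, \quad Z := B_0 \cup \ldots \cup B_{i-1} \cup X_i,
\]
so that $|Z| = h$ regardless of the choice. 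Pairwise unrelatedness then splits into two cases: if $i < i'$, then $B_i$ is fully contained in every element of the copy indexed by $(i', X_{i'})$ but meets each element of $V_{(i, X_i)}$ in at most one point, which, together with a short cardinality count on the remaining blocks, precludes comparability in either direction; if $i = i'$ with $X_i \neq X_{i'}$, the common size $h$ forces the two bottoms to be distinct hence incomparable, and the mixed pairings are ruled out since $c_i, d_i$ lie outside all of the $X_j$'s. Counting the choices of $X_i$ for each $i$ yields exactly \eqref{eq:slsblTlP}.

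For the asymptotic statements, \eqref{eq:lkVtnrmgrStJrlgT} applied directly to \eqref{eq:cssnblTlP} gives $\upS(V,n) \sim \frac{1}{3}\fsb(n)$; for \eqref{eq:slsblTlP}, each summand satisfies $\binom{n-2-2i}{h-2i} \sim 4^{-(i+1)} \fsb(n)$ for fixed $i$, and a truncation argument identical to the $W$ case yields $\loS(V,n) \sim \fsb(n) \sum_{i=0}^\infty 4^{-(i+1)} = \frac{1}{3}\fsb(n)$. For the left-adjoint statements and the density claim, I would prove the analogue of \eqref{eq:lNsGlRZndhBl}, namely $\upS(V,n) \leq \loS(V,n+1)$, by bounding $\upS(V,n)$ against the sum of the first two summands (those indexed by $i = 0$ and $i = 1$) of $\loS(V,n+1)$ for $n$ above an explicit small threshold, and verifying the remaining small $n$ from a precomputed table. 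Given this inequality, the entire density argument from the $W$ case transfers with only routine numerical adjustments: the limiting ratio $\upS(V,n-1)/\upS(V,n)$ is strictly less than $1$, so the geometric series in the $\epsilon$-argument still converges and yields density $1$ for the set $\{k : \upS^\ast(V,k) = \loS^\ast(V,k)\}$.

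The main obstacle will be the case analysis establishing pairwise unrelatedness in the lower-bound construction, where every pairing of an element of $V_{(i,X_i)}$ with an element of $V_{(i',X_{i'})}$ must be checked, with the argument splitting differently in the cases $i = i'$ and $i \neq i'$. A secondary but more routine difficulty is the explicit numerical verification of $\upS(V,n) \leq \loS(V,n+1)$ for the finitely many small $n$ not covered by the two-leading-term asymptotic comparison.
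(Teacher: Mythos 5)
Your proposal is correct and takes essentially the approach that the paper itself prescribes: the paper prints no detailed proof of Proposition \ref{prop:Vposet}, stating only that one should simplify the proof of Proposition \ref{prop:Wposet} by taking $2$-element blocks $B_i$ (so that the eligible vectors degenerate and disappear), with the lower-bound construction credited to Katona and Nagy; your Lubell-type upper bound with $|\Gamma_i|=(n+z_i)\,z_i!\,(n-z_i-1)!$ minimized on $[n-1]$ at $\lint{(n-1)/2}$, your full-block construction whose count is exactly \eqref{eq:slsblTlP}, and your transfer of the asymptotic, left-adjoint, and density arguments constitute precisely that simplification, and each step you sketch checks out. A minor point in your favor: in the density step you assume only that $\upS(V,n-1)/\upS(V,n)$ stays below a fixed constant smaller than $1$ (the true limit is $1/2$, since $\fsb(n-1)/\fsb(n)\to 1/2$), which is the robust form of what the geometric-series argument needs.
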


A few values of $\loS(V,n)$ and $\upS(V,n)$ are listed below
\begin{align}
&\begin{tabular}{l|l|l|l|l|l|l|l|l|l|l|l|l|l}
$n$ & 2 & 3& 4& 5& 6& 7& 8& 9 & 10 & 11 & 12 &13 \cr
\hline
$\loS(V,n)$ & 1 & 1& 2& 4& 7& 13& 24& 46 & 86& 166& 314&610\cr
\hline
$\upS(V,n)$ & 1 & 1& 2& 4& 7& 14& 25& 48 & 90& 173& 326&632\cr
\end{tabular} \, ,
\label{eq:nbCnKkssljssmrZlj}\\
&\begin{tabular}{l|r|r|r|r|r}
$n$ &  14& 15&2022& 2023  \cr
\hline
$\loS(V,n)$ & 1\,163& 2\,269& $\approx 2.848\,220\cdot 10^{606}$ & $\approx 5.695\,500\cdot 10^{606}$ \cr
\hline
$\upS(V,n)$ &  1\,201& 2\,340& $\approx 2.848\,846\cdot 10^{606}$& $\approx 5.696\,752\cdot 10^{606} $ \cr
\hline
$^\text{up}S/{}_\text{lo}S\approx$ & $1.033$ & $1.031$&  $1.000\,219\,853$& $1.000\,219\,780$ \cr
\end{tabular}\,.
\label{eq:tvTkbglnrL}
\end{align}

We do not prove this proposition in the paper.  It would be straightforward to simplify the proof of Proposition \ref{prop:Wposet}  to obtain a proof of Proposition \ref{prop:Vposet}.  (The simplification means that $|B_i|=2$ and all the eligible vectors $\vv$ are of the form $(1,\dots,1)$ and so we do not need them.)
Note that \href{https://arxiv.org/abs/2308.15625v2}{arXiv:2308.15625v2}, the earlier version of this paper, contains a detailed proof  of  Proposition \ref{prop:Vposet}. However, our construction to prove that $\loS(V,n)\leq \Sp(V,n)$ is included already in Katona and Nagy \cite[last page]{KatonaNagy}, where $\loS(V,n) = \Sp(V,n)$ is conjectured. 
Note the little typo in  \cite[equation (27)]{KatonaNagy};  the upper limit of the summation should be $\lint{\frac{n+3}2}$ rather than $\lint{\frac{n+2}2}$. After that this typo is corrected, (27) in  \cite{KatonaNagy} is the same as \eqref{eq:slsblTlP}. 

Next, we give the following mini-table; the computation for it  took twelve minutes, see Footnote \ref{foot:ryzen},.
\begin{equation}
\begin{tabular}{l|r|r|r|r}
$n$ & 2022& 2023 &2024 \cr
\hline
$\loS(W,n)\approx$ &  $2.136\,194\cdot 10^{606}$ & $ 4.271\,332
\cdot 10^{606}$ & $  8.540\,554 \cdot 10^{606}$  \cr
\hline
$\upS(W,n)\approx$ & $2.136\,987
\cdot 10^{606}$& $ 
4.272\,916 \cdot 10^{606} $& $8.543\,720 \cdot 10^{606}$  \cr
\hline
$^\text{up}S/{}_\text{lo}S\approx$ &   $1.000\,371\,103$& $1.000\,370\,920$ &1.000\,370\,737   \cr
\end{tabular}
\label{eq:tvWnnKhnjNlr}
\end{equation}

It follows from Propositions \ref{prop:Wposet} and \ref{prop:Vposet}, Table \ref{tableWsmall}, \eqref{eq:tvgrHptnKhnjN}, \eqref{eq:mFljZrstlWlFj}, \eqref{eq:nbCnKkssljssmrZlj}, \eqref{eq:tvTkbglnrL}, and \eqref{eq:tvWnnKhnjNlr} that the minimum sizes of generating sets of the $k$-th direct powers of the lattices $\Dn V$ and $\Dn W$,  drawn in Figure \ref{figone},  and the 5-element chain $\chain 4$ are  given as follows.
\begin{equation}
\begin{tabular}{l|r|r|r|r|}
$k$ & 2022& 2023 &$3\cdot 10^{606}$ &$5\cdot 10^{606}$    \cr
\hline
$\gmin{\chain4^k}$ &  $18$ & $18$ &$2025$&2026\cr
\hline
$\gmin{D(V)^k}$ &  $15$ & $15$&2023 &2023 \cr\hline
$\gmin{D(W)^k}$ &  $16$ & $16$&2023&2024\cr
\end{tabular}\,\,.
\label{eq:thmwRtlzW}
\end{equation}

\section{Appendix: Maple worksheet}
In this section, we present the Maple worksheet that computed Table  \ref{tableWsmall}; see Footnote \ref{foot:ryzen}.   For the rest of the numerical data in the paper, either the two parameters in the
``\texttt{for n from 3 to 30 do}''  can be modified or a much simpler worksheet would do.
{\small
\begin{verbatim}
> restart;       time0:=time():
> #An upper bound for Sp(W,n):
> upSW:= proc(n) local s; s:=n/(3*n-2-2*floor(n/2));
>   floor(s*binomial(n-1, floor((n-1)/2))); 
> end: 
> # A lower bound for Sp(W,n):
> loSW:=proc(n) local s,i,j,ub,lb,h,summand,returnvalue; 
>   s:=0;  
>   if (n=3) or (n=5) or (n=7) then h:=floor((n-3)/2)
>                              else h:=floor((n-1)/2)
>   fi;
>   for i from 0 to ceil(n/3)-1 do ub:=n-3-3*i;
>    #ub: Upper number in the 2nd Binomial coefficient
>    if ub >= 0 then  
>     for j from 0 to i do lb:=h-2*j-3*(i-j);# j: number of 2's,
>      #lb: Lower number in the 2nd Binomial coefficient
>      if (lb>=0) and (lb<=ub) then 
>        summand:=binomial(i,j)*3^j*binomial(ub,lb); 
>        s:=s+summand;  
>      fi;#end of the "if (lb>=0) and (lb<=ub)" command  
>     od; #end of the  j loop
>    fi; #end of the "if ub >= 0" command 
>   od; #end of the i loop
>  returnvalue:=s; #the procedure returns with the last result 
> end:
> for n from 3 to 30 do lower:=loSW(n): 
>  upper:= upSW(n): 
>  if lower>0 then ratio:=evalf(upper/lower) else ratio:=undefined fi :
>  print(`n=`, n, ` lower=` ,lower, ` upper=`, 
>      upper, ` ratio=`, ratio);
>  if lower>10^6 then
>   print(`lg(lower)=`,evalf(log[10](lower)), 
>         `lg(upper)=`,evalf(log[10](upper))) fi;  
> od: 
> time2:=time():
> print(`The total computation needed `, time2-time0,` seconds.`);
\end{verbatim}
}

\end{document}